\numberwithin{equation}{section}
\newtheorem{theorem}{Theorem}[section]
\newtheorem{proposition}[theorem]{Proposition}
\newtheorem{lemma}[theorem]{Lemma}
\newtheorem{claim}[theorem]{Claim}
\DeclareMathOperator{\CT}{CT}
\DeclareMathOperator{\eup}{e}
\DeclareMathOperator{\id}{id}
\newcommand{\blue}[1]{{\color{blue}  #1}}
\newcommand{\Part}{\mathscr P}
\newcommand{\Comp}{\mathscr C}
\newcommand{\Mat}{\mathscr M}
\newcommand{\Rat}{\mathbb Q}
\newcommand{\Real}{\mathbb R}
\newcommand{\Z}{\mathbb Z}
\newcommand{\F}{\mathbb F}
\newcommand{\Symm}{\mathfrak{S}}
\newcommand{\abs}[1]{\lvert#1\rvert}
\newcommand{\la}{\lambda}
\newcommand{\bil}[2]{(#1,#2)}
\newcommand{\mult}{\mathfrak m}
\newcommand{\ip}[2]{\langle#1,#2\rangle}
\newcommand{\qbin}[2]{\genfrac{[}{]}{0pt}{}{#1}{#2}}
\newcommand{\bc}{c}
\newcommand{\bt}{\boldsymbol{t}}
\newcommand{\bs}{\boldsymbol{s}}
\newcommand{\btau}{\boldsymbol{\tau}}
\newcommand{\A}{\sigma}
\newcommand{\An}{\abs{a}}
\newcommand{\coef}{\mathsf{c}}
\begin{document}

\title{Constant term identities and Poincar\'e polynomials}

\author{Gyula K\'arolyi}
\address{School of Mathematics and Physics,
The University of Queensland, Brisbane, QLD 4072, Australia
and Institute of Mathematics, E\"otv\"os University, 
Budapest, Hungary}

\author{Alain Lascoux}
\address{CNRS, Institut Gaspard Monge,
Universit\'e Paris-Est, Marne-la-Vall\'ee, France}

\author{S. Ole Warnaar}
\address{School of Mathematics and Physics,
The University of Queensland, Brisbane, QLD 4072, Australia}

\thanks{Work supported by the Australian Research Council
and by Hungarian National Scientific Research Funds (OTKA) Grant K100291.}

\subjclass[2010]{05A19, 05E05, 17B22, 20F55}

\begin{abstract}
In 1982 Macdonald published his now famous constant term conjectures for
classical root systems.
This paper begins with the almost trivial observation that Macdonald's
constant term identities admit an extra set of free parameters, thereby 
linking them to Poincar\'e polynomials.
We then exploit these extra degrees of freedom in the case of type 
$\mathrm{A}$ to give the first proof of Kadell's orthogonality
conjecture---a symmetric function generalisation of the $q$-Dyson 
conjecture or Zeilberger--Bressoud theorem.

\noindent
Key ingredients in our proof of Kadell's orthogonality conjecture are 
multivariable Lagrange interpolation, the scalar product for
Demazure characters and $(0,1)$-matrices.

\medskip
\noindent
\textbf{Keywords:}
Constant term identities, Kadell's conjecture,
Poincar\'e polynomials, polynomial lemma, $(0,1)$-matrices.
\end{abstract}

\maketitle

\section{Introduction and summary of results}

Given a finite real reflection group $W$, the classical Poincar\'e
polynomial $W(t)$ is defined as
\cite{Bourbaki02,Humphreys90}
\begin{equation}\label{Eq_Poincare-polynomials}
W(t)=\sum_{w\in W} t^{l(w)},
\end{equation}
where $l$ is the length function on $W$.
A key result in the theory of reflection groups is the 
Chevalley--Solomon product formula \cite{Chevalley55,Solomon66}
\begin{equation}\label{Eq_Wtd}
W(t)=\prod_{i=1}^r \frac{1-t^{d_i}}{1-t},
\end{equation}
where the $d_1,\dots,d_r\geq 2$ are the degrees of the fundamental 
invariants.

For reflection groups of crystallographic type, i.e., Weyl groups,
Macdonald \cite{Macdonald72} generalised the Poincar\'e polynomial
to a multivariable polynomial $W(\bt)$ by attaching a variable
$t_{\alpha}$ to each positive root $\alpha$ of the underlying root 
system.
To be more precise, let $R$ be a reduced irreducible root system 
of rank $r$, and $R^{+}$ (resp. $R^{-}=-R^{+}$) the set of positive
(resp. negative) roots. 
Write $\alpha>0$ if $\alpha\in R^{+}$.
Let $\bt$ denote the alphabet $\bt=\{t_{\alpha}\}_{\alpha>0}$ and for
$S\subseteq R^{+}$ define the word $t_S=\prod_{\alpha\in S}t_{\alpha}$.
Macdonald's multivariable Poincar\'e polynomial is then given by
\[
W(\bt)=\sum_{w\in W} t_{R(w)},
\]
where $R(w)=R^{+}\cap w(R^{-})$. 
Since $\abs{R(w)}=l(w)$, the multivariable Poincar\'e polynomial reduces 
to \eqref{Eq_Poincare-polynomials} when $t_{\alpha}=t$ for all $\alpha$.

In its full generality $W(\bt)$ no longer admits a product form. 
Instead Macdonald \cite[Theorem 2.8]{Macdonald72} showed
(see also \cite[Theorem 1]{Stembridge98}) that it may be expressed as 
\begin{equation}\label{Eq_Poincare}
W(\bt)=
\sum_{w\in W} \prod_{\alpha>0}
\frac{1-t_{\alpha} \eup^{-w(\alpha)}}{1-\eup^{-w(\alpha)}},
\end{equation}
with $\eup^{\alpha}$ a formal exponential.

\medskip

A different discovery of Macdonald, formulated as a conjecture, is the
constant term identity
\cite{Macdonald82}
\begin{equation}\label{Eq_Macdonald-q-CT}
\CT\bigg[\prod_{\alpha>0} (\eup^{-\alpha},q \eup^{\alpha})_k \bigg] 
=\prod_{i=1}^r \qbin{d_i k}{k},
\end{equation}
where 
\[
(a)_k=\prod_{i=0}^{k-1}(1-aq^i)\quad\text{and}\quad
(a_1,\dots,a_m)_k=(a_1)_k\cdots(a_m)_k
\]
are $q$-shifted factorials and
\[
\qbin{n}{m}=\begin{cases}\displaystyle
\frac{(q)_n}{(q)_m(q)_{n-m}} & \text{for $0\leq m\leq n$,} \\[3mm]
0 & \text{otherwise}
\end{cases}
\]
is a $q$-binomial coefficient \cite{Andrews76,GR04}. 
There is a large literature on \eqref{Eq_Macdonald-q-CT}, see e.g., 
\cite{FW08} and references therein, with an ultimately case-free proof 
found by Cherednik \cite{Cherednik93,Cherednik95} based on his double 
affine Hecke algebra \cite{Cherednik05}.

It is not at all difficult to also express the multivariable Poincar\'e
polynomial as a constant term:
\begin{equation}\label{Eq_Poincare-CT}
W(\bt)=\CT\bigg[\prod_{\alpha>0}
(1-\eup^{-\alpha})(1-t_{\alpha}\eup^{\alpha}) \bigg].
\end{equation}
Since this generalises the $k=1$ case of Macdonald's conjecture, it 
then takes little to note that \eqref{Eq_Macdonald-q-CT} and 
\eqref{Eq_Poincare-CT} can in fact be unified.
\begin{proposition}\label{Prop_CTt}
For $k\geq 1$ we have
\begin{equation}\label{Eq_CTW}
\CT\bigg[\prod_{\alpha>0}
(1-\eup^{-\alpha})(1-t_{\alpha}\eup^{\alpha})
(q\eup^{-\alpha},q\eup^{\alpha})_{k-1}\bigg] =
W(\bt) \prod_{i=1}^r \qbin{d_i k-1}{k-1}.
\end{equation}
\end{proposition}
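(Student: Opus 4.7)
The plan is to deduce \eqref{Eq_CTW} from formula \eqref{Eq_Poincare} and Macdonald's constant term identity \eqref{Eq_Macdonald-q-CT} by a symmetrization argument, in keeping with the authors' description of this as an ``almost trivial'' observation.

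First I would isolate the $W$-invariant factor $G_0 := \prod_{\alpha > 0}(q\eup^{-\alpha},q\eup^\alpha)_{k-1}$. Writing it as $\prod_{\alpha \in R}(q\eup^\alpha)_{k-1}$ makes it manifest that $G_0$ is invariant both under $W$ and under $\eup \mapsto \eup^{-1}$. Since $\CT$ also commutes with $\eup \mapsto \eup^{-1}$, the LHS of \eqref{Eq_CTW} may equivalently be written as $\CT\big[G_0\prod_{\alpha > 0}(1-\eup^\alpha)(1-t_\alpha\eup^{-\alpha})\big]$.

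Next I would multiply \eqref{Eq_Poincare} through by the $W$-invariant polynomial $D := \prod_{\alpha \in R}(1-\eup^\alpha) = \prod_{\alpha>0}(1-\eup^\alpha)(1-\eup^{-\alpha})$. Inside each summand, $D = \prod_{\alpha>0}(1-\eup^{w\alpha})(1-\eup^{-w\alpha})$ by $W$-invariance, and the factors $\prod_{\alpha>0}(1-\eup^{-w\alpha})$ cancel the denominators from \eqref{Eq_Poincare}, yielding
\[
W(\bt)\cdot D = \sum_{w\in W}w\!\bigg[\prod_{\alpha > 0}(1-\eup^\alpha)(1-t_\alpha\eup^{-\alpha})\bigg].
\]
Multiplying by $G_0$ and taking $\CT$, the factor $W(\bt)$ pulls out on the left since it does not depend on $\eup$, while the $W$-invariance of $\CT$ and of $G_0$ implies that every term on the right contributes the same amount, producing
\[
W(\bt)\cdot\CT[G_0 D] = |W|\cdot\mathrm{LHS}.
\]
Consequently $\mathrm{LHS} = W(\bt)\cdot C$ where $C := \CT[G_0 D]/|W|$ is a constant independent of $\bt$.

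Finally I would pin down $C$ by specializing $t_\alpha = q^k$ for every $\alpha$: the integrand on the LHS collapses to the Macdonald product $\prod_{\alpha>0}(\eup^{-\alpha},q\eup^\alpha)_k$, so \eqref{Eq_Macdonald-q-CT} gives $\prod_i\qbin{d_ik}{k}$, while \eqref{Eq_Wtd} gives $W(q^k) = \prod_i(1-q^{d_ik})/(1-q^k)$. Solving for $C$ and using the elementary identity $\qbin{d_ik}{k}(1-q^k) = \qbin{d_ik-1}{k-1}(1-q^{d_ik})$ yields $C = \prod_i\qbin{d_ik-1}{k-1}$, completing the proof. The only point requiring genuine care is orchestrating the two symmetries in concert—the Weyl symmetry (to collapse the sum in \eqref{Eq_Poincare} after multiplication by $D$) and $\eup \to \eup^{-1}$ (to match the orientation of the products on the two sides)—after which the reduction to Macdonald's classical identity is essentially mechanical.
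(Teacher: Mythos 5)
Your argument is correct, and it shares the central mechanism with the paper's proof: both rely on Macdonald's formula \eqref{Eq_Poincare}, the $W$-invariance of the constant term functional, and the $\eup\mapsto\eup^{-1}$ symmetry to show that the left-hand side equals $W(\bt)$ times a $\bt$-independent constant $C=\CT\big[\prod_{\alpha>0}(\eup^{-\alpha},\eup^\alpha)_k\big]/\abs{W}$. Where you diverge from the paper is in how you pin down $C$. The paper (working in the more general inhomogeneous setting with exponents $k_\alpha$) invokes Cherednik's Lemma~4.4 to convert $\CT\big[\prod(\eup^{-\alpha},q\eup^\alpha)_{k_\alpha}\big]$ into the needed symmetric constant term $\CT\big[\prod(\eup^{-\alpha},\eup^\alpha)_{k_\alpha}\big]$, and then appeals to the inhomogeneous Macdonald identity \eqref{Eq_Macdonald-q-CT-inhom}. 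You instead specialize $t_\alpha=q^k$, observe that the kernel collapses to the Macdonald product via $(1-\eup^{-\alpha})(q\eup^{-\alpha})_{k-1}=(\eup^{-\alpha})_k$ and $(1-q^k\eup^\alpha)(q\eup^\alpha)_{k-1}=(q\eup^\alpha)_k$, and then divide by the Chevalley--Solomon evaluation $W(q^k)=\prod_i(1-q^{d_ik})/(1-q^k)$. This is cleaner for the homogeneous case, since it sidesteps Cherednik's lemma entirely and reduces to \eqref{Eq_Macdonald-q-CT} plus the elementary $q$-binomial identity $\qbin{d_ik}{k}(1-q^k)=\qbin{d_ik-1}{k-1}(1-q^{d_ik})$. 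The trade-off is that the specialization trick would become circular if one tried to push it to the inhomogeneous version \eqref{Eq_CT-inhom} directly, because the product formula for $W(\bt)$ restricted to $t_\alpha=q^{k_\alpha}$ is itself derived in the paper as a corollary of that result; the paper's route through Cherednik's lemma avoids this issue and delivers the stronger statement.
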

This is perhaps an elegant result---connecting Poincar\'e polynomials
and Mac\-do\-nald-type constant term identities---but, assuming 
\eqref{Eq_Macdonald-q-CT}, not at all deep.
More interesting is what happens if one restricts considerations to the
root system $\mathrm{A}_{n-1}$ for which the set of positive roots may
be taken to be $R^{+}=\{\epsilon_i-\epsilon_j:~1\leq i<j\leq n\}$,
with $\epsilon_i$ the $i$th standard unit vector in $\Real^n$. 
Then \eqref{Eq_Macdonald-q-CT} admits the following inhomogeneous 
generalisation known as the Andrews' $q$-Dyson conjecture 
\cite{Andrews75} or the Zeilberger--Bressoud theorem \cite{ZB85}
(see also \cite{BG85,GX06,KN12}):
\begin{equation}\label{Eq_q-Dyson}
\CT\bigg[ \prod_{1\leq i<j\leq n}
(x_i/x_j)_{a_i}(qx_j/x_i)_{a_j} \bigg]=
\frac{(q)_{a_1+\cdots+a_n}}{(q)_{a_1}\cdots (q)_{a_n}},
\end{equation}
where we have identified $\exp(-\epsilon_i)$ with $x_i$. 
In view of Proposition~\ref{Prop_CTt} it is natural to try to generalise
\eqref{Eq_q-Dyson} by replacing 
\[
(qx_j/x_i)_{a_j}\quad\text{by}\quad (qx_j/x_i)_{a_j-1}(1-t_{ij}x_j/x_i).
\]
Here we have made the further identification of 
$t_{\epsilon_i-\epsilon_j}$ with $t_{ij}$.
To describe the resulting constant term identity we need some further 
notation.

Let $\binom{[n]}{2}=\{(i,j):~1\leq i<j\leq n\}$ and for $S\subseteq
\binom{[n]}{2}$ set $t_S=\prod_{(i,j)\in S} t_{ij}$.
Let $I(w)=\{(i,j)\in \binom{[n]}{2}:~w(i)>w(j)\}$ be the inversion set
of the permutation $w\in\Symm_n$ and 
$R(w)=I(w^{-1})=\big\{(w(j),w(i))\in \binom{[n]}{2}:~i<j\big\}$.
For $a=(a_1,\dots,a_n)$ a sequence of nonnegative integers we write
$\An$ and $\A_i$ for the sum of its components and the
$i$th partial sum respectively, i.e.,
$\An=a_1+\cdots+a_n$ and $\A_i=a_1+\cdots+a_i$.
The notation $\An$ and $\A_n$ will be used interchangeably.
The $q$-multinomial coefficient $\qbin{\An}{a}$ can now be defined as
\begin{equation}\label{Eq_qmultinomial}
\qbin{\An}{a}=\frac{(q)_{\An}}{(q)_{a_1}\cdots(q)_{a_n}}=
\prod_{i=1}^n \qbin{\A_i}{a_i} .
\end{equation}
Finally we introduce an inhomogeneous version of the multivariable
$\mathrm{A}_{n-1}$ Poincar\'e polynomial, with coefficients in $\Rat(q)$,
as follows:
\[
W_a(\bt)=\sum_{w\in\Symm_n} t_{R(w)}
\prod_{i=1}^n \frac{1-q^{\A_i}}{1-q^{w(\A_i)}} ,
\]
where $w(\A_i)=a_{w(1)}+\cdots+a_{w(i)}$.
Note that $W_{(k,\dots,k)}(\bt)=W(\bt)$.

\begin{theorem}\label{Thm_Poincare-q-Dyson}
Let $a=(a_1,\dots,a_n)$ be a sequence of positive integers. Then
\begin{equation}\label{Eq_Poincare-q-Dyson}
\CT\bigg[ \prod_{1\leq i<j\leq n}
(x_i/x_j)_{a_i}(qx_j/x_i)_{a_j-1}(1-t_{ij} x_j/x_i)\bigg]
=W_a(\bt)\prod_{i=1}^n \qbin{\sigma_i-1}{a_i-1} .
\end{equation}
\end{theorem}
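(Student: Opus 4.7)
The plan is to prove \eqref{Eq_Poincare-q-Dyson} by multivariable Lagrange interpolation in the alphabet $\bt$, reducing to suitable instances of the Zeilberger--Bressoud theorem \eqref{Eq_q-Dyson}.

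First I would observe that both sides are multilinear polynomials in $\bt$ of total degree at most $\binom{n}{2}$. On the left, each $t_{ij}$ occurs in exactly one linear factor $(1-t_{ij}x_j/x_i)$, and the $\CT$ operator preserves this multilinearity. On the right, $W_a(\bt)=\sum_{w\in\Symm_n} c_w(q)\,t_{R(w)}$ is by construction a $\Rat(q)$-linear combination of square-free monomials, while the prefactor $\prod_i\qbin{\sigma_i-1}{a_i-1}$ is $\bt$-free.

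The key specialization is $t_{ij}=q^{a_j}$ for all $i<j$: then $(qx_j/x_i)_{a_j-1}(1-q^{a_j}x_j/x_i)=(qx_j/x_i)_{a_j}$, so the LHS collapses to the $q$-Dyson integrand and \eqref{Eq_q-Dyson} evaluates it to $\qbin{\An}{a}$. I would next seek a family $\{\bt^{w}:w\in\Symm_n\}$ of analogous $q$-power specializations indexed by permutations, chosen so that under a suitable relabelling of the $x_i$'s the LHS again reduces to a $q$-Dyson constant term, while on the right $W_a(\bt^{w})$ can be computed by means of the scalar product for Demazure characters mentioned in the introduction. To justify the Lagrange interpolation at these $n!$ nodes (rather than at $2^{\binom{n}{2}}$ Boolean points), I would then show that the LHS, viewed as a multilinear polynomial in $\bt$, in fact lies in the $n!$-dimensional subspace $\mathrm{span}_{\Rat(q)}\{t_{R(w)}:w\in\Symm_n\}$ of $\Rat(q)[\bt]$. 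The $(0,1)$-matrix formalism alluded to in the introduction should encode the required linear dependencies among the constant-term evaluations of
\[
\prod_{(i,j)\in S}(x_j/x_i)\cdot\prod_{i<j}(x_i/x_j)_{a_i}(qx_j/x_i)_{a_j-1}
\]
for subsets $S\subseteq\binom{[n]}{2}$ not of the form $R(w)$, each such $S$ corresponding to a strictly upper triangular $(0,1)$-matrix.

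The main obstacle will be the simultaneous construction of the permutation-indexed nodes $\bt^{w}$ and the $(0,1)$-matrix argument placing the LHS in the permutation-spanned subspace. Once these are in place, the $n!$ LHS evaluations (each an instance of \eqref{Eq_q-Dyson}) can be matched against the Demazure-character evaluations of $W_a(\bt^{w})$ on the RHS to complete the proof.
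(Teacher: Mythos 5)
The core difficulty in proving \eqref{Eq_Poincare-q-Dyson} is to show that the coefficient $\coef(a;S)$ of $t_S$ in the left-hand side vanishes for every $S\subseteq\binom{[n]}{2}$ that is not an inversion set $R(w)$; once this is known, the remaining $n!$ coefficients can indeed be computed rather directly. Your proposal correctly identifies this dimension reduction as ``the main obstacle'' but does not supply an argument for it, and the tools you invoke to fill the gap---the $(0,1)$-matrix formalism and the scalar product for Demazure characters---are in the paper used for something else entirely (the derivation of Kadell's conjecture from Theorem~\ref{Thm_Poincare-q-Dyson} in Section~\ref{Sec_Kadell}, via the identity $\ip{K_v}{\hat K_w}=\delta_{v,\bar w}$), not for the vanishing statement itself. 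In the paper the vanishing is proven by applying the Laso\'n/Karasev--Petrov polynomial lemma directly \emph{in the $x$-variables}: for each $S$ one builds sets $A_i=\{q^{\alpha_i}:\alpha_i\in B_i\}$ (of sizes governed by $\ell_i=(n-i)+d_i-e_i$) so that the relevant homogeneous polynomial $F_S$ is nonzero on at most one point of $A_1\times\cdots\times A_n$, and none at all unless the statistic $K(S)$ equals $n$; Lemmas~\ref{Lem_elli} and \ref{Lem_bijection} then show that $K(S)=n$ is equivalent to $S$ being an inversion set. That is a combinatorial argument on the exponent sets $B_i$, with no interpolation in the $\bt$-alphabet anywhere.

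A second difficulty with interpolating in $\bt$ is the choice of nodes. Even granting the dimension reduction, you would need $n!$ specializations $\bt^w$ whose evaluation matrix against $\{t_{R(w')}\}$ is invertible, and at which the left-hand side is already known. Setting $t_{ij}=q^{a_j}$ collapses to $q$-Dyson, but the natural further specializations $t_{ij}\in\{0,q^{a_j}\}$ correspond to tournaments (set $T=\bar S\cup\{(j,i):(i,j)\in S\}$), and the resulting constant terms are the Bressoud--Goulden tournament evaluations \cite[Thm.~2.2]{BG85}, not instances of plain $q$-Dyson: a relabelling of the $x_i$'s does not produce a $q$-Dyson kernel because the sub-/superscripts $a_i$ and $a_j-1$ are tied to the row/column roles of $i,j$. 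In fact the paper points out that the Bressoud--Goulden theorems give an \emph{alternative} proof of Theorem~\ref{Thm_Poincare-q-Dyson}, so using them as black boxes is possible---but then the whole theorem is essentially an input, and one should be explicit that this is what is happening rather than invoking \eqref{Eq_q-Dyson} alone. As written, the proposal leaves the vanishing step (the heart of the matter) and the interpolation-node construction both unresolved.
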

For $a_1=\dots=a_n=k$ the right-hand side simplifies to
\[
W(\bt)\prod_{i=1}^{n-1} \qbin{(i+1)k-1}{k-1},
\]
and we recover the $\mathrm{A}_{n-1}$ case of \eqref{Eq_CTW}.

Our proof of Theorem~\ref{Thm_Poincare-q-Dyson} uses the polynomial 
lemma of Laso\'n \cite{Lason10} and Karasev--Petrov \cite{KP12}---a 
form of multivariable 
Lagrange interpolation in the spirit of the Combinatorial 
Nullstellensatz \cite{Alon99}.
The efficacy of the polynomial lemma to constant term identities was
recently demonstrated in \cite{KN12} in the form of a one-page proof of
the $q$-Dyson conjecture \eqref{Eq_q-Dyson}.

Several constant term identities due to Bressoud and Goulden \cite{BG85}
follow from Theorem~\ref{Thm_Poincare-q-Dyson} in a very easy manner.
If $t_{ij}=0$ for all $i$ and $j$ then $W_a(\bt)=1$ and we obtain
\cite[Theorem 2.2, $\sigma=\id$]{BG85}.
More generally, for $I\subseteq \{1,2,\dots,n\}$
and $\bar{I}$ its complement, let $t_{ij}=0$ if $j\in I$ and 
$t_{ij}=q^{a_j}$ if $j\in\bar{I}$.
Then only those permutations $w$ contribute to $W_a(\bt)$ for which
$w(i)=i$ for $i\in I$. Replacing the sequence $(q^{a_j})_{j\in\bar{I}}$
by $(u_1,\dots,u_m)$ ($m:=\abs{\bar{I}}$), we are left with the
simple computation (see Page \pageref{pagelabel})
\begin{equation}\label{Eq_usum}
\sum_{w\in\Symm_m}w\bigg(\,\prod_{i=1}^m\frac{1-u_i}
{1-u_1\cdots u_i}\bigg) \prod_{(i,j)\in R(w)}u_j=1.
\end{equation}
Hence $W_a(\bt)=\prod_{i\in\bar{I}} (1-q^{\A_i})/(1-q^{a_i})$, and
we obtain the generalised $q$-Dyson identity 
\cite[Theorem 2.5]{BG85}
\[
\CT\bigg[\prod_{1\leq i<j\leq n}
(x_i/x_j)_{a_i}(qx_j/x_i)_{a_j-\chi(j\in I)} \bigg]=
\qbin{\An}{a} \prod_{i\in I} \frac{1-q^{a_i}}{1-q^{\A_i}},
\]
where $\chi$ is the indicator function.
Similarly, if $t_{ij}=-1$ we may use
\[
\sum_{w\in\Symm_n} (-1)^{l(w)} 
w\bigg(\,\prod_{i=1}^n \frac{1-u_i}{1-u_1\cdots u_i}\bigg)=
\prod_{1\leq i<j\leq n} \frac{u_i-u_j}{1-u_iu_j}
\]
to find $W_a(\bt)=\prod_{i=1}^n (1-q^{\A_i})/(1-q^{a_i})
\prod_{i<j}(q^{a_i}-q^{a_j})/(1-q^{a_i+a_j})$. This results in
\cite[Theorem 2.7]{BG85}
\[
\CT\bigg[\prod_{1\leq i<j\leq n} (x_j/x_i-x_i/x_j)
\prod_{i\neq j}(qx_i/x_j)_{a_i-1} \bigg]=
\qbin{\An}{a}\prod_{1\leq i<j\leq n} 
\frac{q^{a_i}-q^{a_j}}{1-q^{a_i+a_j}}.
\]
The fact that Theorem~\ref{Thm_Poincare-q-Dyson} allows us to reprove
the constant term identities of Bressoud and Goulden is not too
surprising. Combining two of the key theorems of their paper---both
formulated in the language of tournaments---and reinterpreting these 
in terms of permutations provides an alternative method of proof of 
Theorem~\ref{Thm_Poincare-q-Dyson}. 

\medskip

As a much deeper and more interesting application than the reproof of 
known results, we will show that Theorem~\ref{Thm_Poincare-q-Dyson} 
may be used to prove Kadell's $q$-Dyson orthogonality 
conjecture~\cite{Kadell00}.

For $x=(x_1,\dots,x_n)$ and $u=(u_1,\dots,u_n)$ a sequence of integers,
write $x^u$ for the monomial $x_1^{u_1}\cdots x_n^{u_n}$.
If all $u_i$ are nonnegative we refer to $u$ as a composition and, 
if in addition $u_1+\cdots+u_n=k$, we write $\abs{u}=k$.
The set of all compositions of the form $(u_1,\dots,u_n)$ will be denoted by
$\Comp_n$.
If $u\in\Comp_n$ satisfies $u_1\geq u_2\geq \cdots \geq u_n$
we say that $u$ is a partition, and $\Part_n$ will denote
the set of all partitions in $\Comp_n$.
As is customary, we will often denote partitions by the 
Greek letters $\la,\mu,\nu$ and not display their tails of zeros.
The unique partition in the $\Symm_n$ orbit of $u\in\Comp_n$ 
is denoted by $u^+$.
Let $s_{\la}(x)$ be the classical Schur function \cite{Macdonald95}
\[
s_{\la}(x)=\frac{\det_{1\leq i,j\leq n}\big(x_i^{\la_j+n-j}\big)}
{\prod_{1\leq i<j\leq n} (x_i-x_j)}
\]
for $\la\in\Part_n$.
For $\la=(m)$ this simplifies to the $m$th complete symmetric function
\[
h_m(x)=\sum_{\abs{v}=m} x^v.
\]
Given a sequence of nonnegative integers $a=(a_1,\dots,a_n)$ let
\begin{equation}\label{Eq_xa}
x^{(a)}=\big(x_1,x_1q,\dots,x_1q^{a_1-1},\dots,
x_n,x_nq,\dots,x_nq^{a_n-1}\big)
\end{equation}
or, in the notation of $\la$-rings \cite{Lascoux03},
\[
x^{(a)}=
x_1\frac{1-q^{a_1}}{1-q}+\cdots+x_n \frac{1-q^{a_n}}{1-q}.
\]
We will be interested in constant terms of the form
\begin{equation}\label{Eq_D_vla}
D_{v,\la}(a):=\CT\bigg[x^{-v} s_{\la}\big(x^{(a)}\big) 
\prod_{1\leq i<j\leq n} (x_i/x_j)_{a_i} (qx_j/x_i)_{a_j} \bigg],
\end{equation}
for $v\in\Comp_n$ and $\la\in\Part_{\An}$.
Before stating Kadell's conjecture we make a few general comments about
the above constant term. Firstly, by homogeneity $D_{v,\la}(a)=0$ if 
$\abs{v}\neq\abs{\la}$. Secondly, if $n=1$, and viewing $v$ and $a$ as 
scalars,
\[
D_{v,\la}(a)=s_{\la}\Big(\frac{1-q^a}{1-q}\Big)
\CT\big[x^{\abs{\la}-v}\big]=
\delta_{v,\abs{\la}} 
q^{\sum_{i<j}\lambda_j}\prod_{s\in\la} \frac{1-q^{a+c(s)}}{1-q^{h(s)}},
\]
where $c(s)$ and $h(s)$ are the content
and hook-length of the square $s$ in the diagram of $\la$, see 
\cite[p. 44]{Macdonald95}.
As a third remark we note that if $a_k=0$ then 
$s_{\la}\big(x^{(a)}\big)$ does not depend on $x_k$ and the double
product over $i<j$, viewed as a function of $x_k$, takes the form 
$c_0+c_{-1}x_k^{-1}+c_{-2}x_k^{-2}+\cdots$.
Hence the constant term will be zero unless $v_k=0$.
Lastly, it is natural to more generally consider \eqref{Eq_D_vla} for $v$ an
arbitrary element of $\Z^n$. Unfortunately, the method developed 
for proving Kadell's orthogonality conjecture has little to say 
about this more general range of $v$;
as we shall see later, the $i$th entry of the composition $v$ arises as
the $i$th row sum of a $(0,1)$-matrix. 
Only in Section~\ref{Sec_Questions} will we consider \eqref{Eq_D_vla}
for $v\not\in\Comp_n$.

\begin{theorem}[Kadell's orthogonality conjecture \cite{Kadell00}]
\label{Thm_Kadell-conjecture}
For $m$ a positive integer, $v\in\Comp_n$ and $a=(a_1,\dots,a_n)$ a
sequence of nonnegative integers,
\begin{subequations}
\begin{equation}\label{Eq_Kadell-nul}
D_{v,(m)}(a)=0 \quad \text{if $v^{+}\neq (m)$}
\end{equation}
and
\begin{equation}\label{Eq_Kadell-niet-nul}
D_{v,(m)}(a)=
\frac{q^{\A_n-\A_k} (1-q^{a_k})(q^{\abs{a}})_m}
{(1-q^{\abs{a}})(q^{\An-a_k+1})_m} \qbin{\abs{a}}{a}
\quad \text{if $v=(\underbrace{0,\dots,0}_{k-1\text{ times}},m,
\underbrace{0,\dots,0}_{n-k\text{ times}})$.}
\end{equation}
\end{subequations}
\end{theorem}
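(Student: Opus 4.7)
The plan is to use Theorem~\ref{Thm_Poincare-q-Dyson} as the main engine, combining it with an expansion of $s_{(m)}(x^{(a)})=h_m(x^{(a)})$ that injects $(0,1)$-matrices into the picture. First I would use the $\lambda$-ring interpretation of \eqref{Eq_xa} to write
\[
h_m(x^{(a)})=\sum_{\abs{u}=m} x^u \prod_{i=1}^n \qbin{a_i+u_i-1}{u_i},
\]
the sum being over compositions $u\in\Comp_n$. Substituting into \eqref{Eq_D_vla}, Kadell's constant term reduces to
\[
D_{v,(m)}(a)=\sum_{\abs{u}=m}\prod_{i=1}^n\qbin{a_i+u_i-1}{u_i}\,
\CT\bigg[x^{u-v}\prod_{1\le i<j\le n}(x_i/x_j)_{a_i}(qx_j/x_i)_{a_j}\bigg],
\]
so everything hinges on evaluating the ``shifted'' $q$-Dyson constant terms on the right.

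Second, I would exploit the free parameters $t_{ij}$ in Theorem~\ref{Thm_Poincare-q-Dyson}. Expanding
\[
\prod_{1\le i<j\le n}(1-t_{ij}x_j/x_i)=\sum_{S\subseteq\binom{[n]}{2}} (-1)^{\abs{S}}\,t_S\, x^{c(S)-r(S)},
\]
where $S$ is identified with an upper-triangular $(0,1)$-matrix of row sums $r(S)$ and column sums $c(S)$, and matching coefficients of $t_S$ on both sides of \eqref{Eq_Poincare-q-Dyson}, yields a closed form for $\CT[x^{c(S)-r(S)}\prod(x_i/x_j)_{a_i}(qx_j/x_i)_{a_j-1}]$ in terms of $[t_S]W_a(\bt)$. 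Using the telescoping identity $(qx_j/x_i)_{a_j}=(qx_j/x_i)_{a_j-1}(1-q^{a_j}x_j/x_i)$ and re-expanding, the shifted constant terms from paragraph one become signed sums over pairs of upper-triangular $(0,1)$-matrices, with weights combining powers of $q^{a_j}$ and the coefficients $[t_S]W_a(\bt)$ evaluated at $t_{ij}=q^{a_j}$.

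Assembled together, $D_{v,(m)}(a)$ becomes a multiple sum over compositions $u$ with $\abs{u}=m$, subsets $S\subseteq\binom{[n]}{2}$, and permutations $w\in\Symm_n$ (from the defining formula for $W_a(\bt)$), whose weights mix $q$-binomials $\qbin{a_i+u_i-1}{u_i}$, signed powers of $q^{a_j}$, and the rational function $\prod_i(1-q^{\sigma_i})/(1-q^{w(\sigma_i)})$ restricted to those $w$ with $R(w)\subseteq S$. The main obstacle is to show that this sum collapses to zero when $v^+\ne(m)$ and to the right-hand side of \eqref{Eq_Kadell-niet-nul} when $v$ has a single nonzero entry in position $k$. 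My approach would be to group $(0,1)$-matrices by their row sums---which, together with $u$, effectively fix $v$---and use the scalar-product formula for Demazure characters to telescope the permutation sum against the $(0,1)$-matrix sum. This is the same mechanism as in \eqref{Eq_usum}, which already delivered the Bressoud--Goulden identities as a warm-up, but executed in a considerably more delicate setting.

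Finally, the polynomial lemma of Laso\'n and Karasev--Petrov would serve as a complementary tool whenever an intermediate identity can be recast as equality of polynomials of bounded degree in the $x_i$ (or in $q^{a_1},\dots,q^{a_n}$) and verified on a combinatorially natural grid. I expect the Demazure-character collapse in paragraph three to be the hardest step: it must reproduce the very specific prefactor $q^{\An-\sigma_k}(1-q^{a_k})(q^{\An})_m/[(1-q^{\An})(q^{\An-a_k+1})_m]$ multiplying $\qbin{\An}{a}$, together with exact vanishing outside the single-entry compositions, and this is where the interplay between $W_a(\bt)$ and the $(0,1)$-matrix expansion of the Schur factor has to be harnessed most carefully.
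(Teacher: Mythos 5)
Your plan correctly identifies the raw ingredients of the paper's argument---Theorem~\ref{Thm_Poincare-q-Dyson}, $(0,1)$-matrices, the Demazure scalar product---but the mechanism you propose for combining them will not work, and you are missing the one step that actually makes the paper's proof go.

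The issue is that you work entirely with $n$ variables. After expanding $h_m(x^{(a)})$ as a sum over compositions $u$ with $\abs{u}=m$, you need to evaluate $\CT\big[x^{u-v}\prod_{i<j}(x_i/x_j)_{a_i}(qx_j/x_i)_{a_j}\big]$ for arbitrary such $u-v$. You propose to get these from Theorem~\ref{Thm_Poincare-q-Dyson} by matching coefficients of $t_S$, which indeed gives access to $\CT\big[x^{c(S)-r(S)}\prod(x_i/x_j)_{a_i}(qx_j/x_i)_{a_j-1}\big]$ for $S\subseteq\binom{[n]}{2}$. But these upper-triangular $(0,1)$-matrices produce only a bounded set of shift vectors $c(S)-r(S)$ (the nonzero Bressoud--Goulden coefficients), which for $m$ larger than about $n$ cannot cover the exponents $u-v$ you need. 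Re-expanding $(qx_j/x_i)_{a_j}=(qx_j/x_i)_{a_j-1}(1-q^{a_j}x_j/x_i)$ does not help: it merely evaluates $t_{ij}$ at $q^{a_j}$ and reproduces the $q$-Dyson constant term, not its deeper coefficients. You would essentially need the full Lv--Xin--Zhou result, which is not available inside this argument.

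The crucial idea the paper uses, and your proposal misses, is to enlarge the number of variables: introduce $m$ auxiliary variables $y=(y_1,\dots,y_m)$, consider the $(m+n)$-variable kernel $D(a1^m;x,y;\btau)$ with all $t_{ij}=0$ for $i>n$, and observe that the cross terms $\prod_{i,j}(x_i/y_j)_{a_i}$ unfold via the dual Cauchy identity into $\sum_\la (-1)^{|\la|} s_\la(x^{(a)}) s_{\la'}(y^{-1})$. Taking $\CT_y$ against the remaining factor $\prod_{i<j}(1-y_i/y_j)$ then realises the scalar product $\langle\,\cdot\,,s_{\la'}\rangle_y$, and the cross factors $\prod_{i,j}(1-s_{ij}y_j/x_i)$ produce the rectangular $n\times m$ $(0,1)$-matrices $\kappa$---not upper-triangular ones---whose row sums give $v$ and whose column sums pair with $\la'$ via \eqref{Eq_Schur-Monomial}. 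This is what turns the Schur factor $s_{(m)}(x^{(a)})$ into something Theorem~\ref{Thm_Poincare-q-Dyson} (now on $m+n$ variables with parameter sequence $a1^m$) can handle directly; the ``collapse'' you flag as the hardest step is then genuinely easy, because the condition $t_{ij}=0$ for $i>n$ kills all but $(m+n)!/m!$ permutations and Proposition~\ref{Prop_CT-zero}/\ref{Prop_CT-vnu} drops out. Without this auxiliary-variable step, your multiple sum over $(u,S,w)$ has no visible cancellation structure, and there is no reason to expect it to telescope.
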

We remark that Kadell's original conjecture only includes the $k=1$ case
of \eqref{Eq_Kadell-niet-nul}, i.e., $v=(m)$, and misses both the term 
$q^{\A_n-\A_1}$ and the $+1$ in $(q^{\An-a_1+1})_m$.

We obtain several more general results than 
Theorem~\ref{Thm_Kadell-conjecture} involving Schur functions.
A particularly simple example is
\begin{equation}\label{Eq_strict}
D_{\la,\la}(a)=q^{\sum_{i<j}a_j}\prod_{i=1}^n 
\qbin{\la_i+a_i+\cdots+a_n-1}{a_i-1},
\end{equation}
provided $\la$ is a strict partition, i.e., 
$\la_1>\la_2>\cdots>\la_n\geq 0$ and all $a_i>0$.

\medskip

The remainder of this paper is organised as follows. In the next section
we give a simple proof of an inhomogeneous version of 
Proposition~\ref{Prop_CTt}. Then, in Section~\ref{Sec_PqD}, we use the
polynomial lemma to give a proof of Theorem~\ref{Thm_Poincare-q-Dyson}
and show how the theorem relates to constant term identities of Bressoud 
and Goulden.
In Section~\ref{Sec_Kadell}, we apply
Theorem~\ref{Thm_Poincare-q-Dyson} to prove and generalise Kadell's
orthogonality conjecture.
Finally, in Section~\ref{Sec_Questions}, answering a question raised by
the anonymous referee, we show that Kadell's orthogonality conjecture
implies a conjecture of Sills \cite{Sills06} proved previously by
Lv, Xin and Zhou using different means \cite{LXZ09}.

\section{Proposition~\ref{Prop_CTt} and its inhomogeneous extension}

This section, which is elementary in its contents, may be viewed as a 
warm-up exercise to the more involved considerations of subsequent sections.
We do however assume the reader has a basic knowledge of root systems,
see e.g., \cite{Humphreys78,Bourbaki02}.

Although \eqref{Eq_Poincare-CT} is a special case of 
Proposition~\ref{Prop_CTt}, we establish it prior to proving the more 
general result. 
We make this distinction
because the former requires little more than the Weyl denominator 
formula whereas our proof of Proposition~\ref{Prop_CTt}
relies on the deep result \eqref{Eq_Macdonald-q-CT}.

Let $\rho=(1/2)\sum_{\alpha>0}\alpha$ be the Weyl vector of $R$.
By the Weyl denominator formula
\[
\prod_{\alpha>0} (1-\eup^{-\alpha})=
\sum_{w\in W} (-1)^{l(w)} \eup^{w(\rho)-\rho}
\]
and the expansion
\begin{equation}\label{Eq_expansion}
\prod_{\alpha>0} (1-t_{\alpha}\eup^{\alpha})=
\sum_{S\subseteq R^{+}}(-1)^{\abs{S}}t_S\eup^{\sum_{\alpha\in S}\alpha},
\end{equation}
we have
\[
\CT\bigg[\prod_{\alpha>0}
(1-\eup^{-\alpha})(1-t_{\alpha}\eup^{\alpha}) \bigg] 
=\sum_{w\in W} \sum_{S\subseteq R^{+}}(-1)^{l(w)+\abs{S}} 
t_S \CT\Big[\eup^{w(\rho)-\rho+\sum_{\alpha\in S}\alpha}\Big].
\]
Since $w(\rho)-\rho=-\sum_{\alpha\in R(w)} \alpha$ (see e.g.,
\cite[p. 167]{Macdonald72}) the right-hand side may also be written as
\[
\sum_{w\in W}\sum_{S\subseteq R^{+}}(-1)^{l(w)+\abs{S}}t_S
\CT\Big[\eup^{\sum_{\alpha\in S}
\alpha-\sum_{\alpha\in R(w)}\alpha}\Big].
\]
The constant term vanishes unless $S=R(w)\subseteq R^{+}$ so that we are
left with $\sum_{w\in W} t_{R(w)}=W(\bt)$ as claimed.

Next we turn to the proof of the more general Proposition~\ref{Prop_CTt}.
In fact, what we shall prove is an inhomogeneous version of the 
proposition which generalises another ex-conjecture of Macdonald, also 
proved by Cherednik.
Let $\{k_{\alpha}\}_{\alpha>0}$ be a set of integers constant along Weyl
orbits, i.e., $k_{\alpha}=k_{\beta}$ for $\|\alpha\|=\|\beta\|$. 
Then \cite[Conjecture 2.3]{Macdonald82}, \cite[Theorem 1.1]{Cherednik93},
\cite[Theorem 0.1]{Cherednik95}
\begin{equation}\label{Eq_Macdonald-q-CT-inhom}
\CT\bigg[\prod_{\alpha>0} (\eup^{-\alpha},q \eup^{\alpha})_{k_{\alpha}}
\bigg] 
=\prod_{\alpha>0} 
\frac{(q)_{\bil{\rho_k}{\alpha^{\vee}}+k_{\alpha}}
(q)_{\bil{\rho_k}{\alpha^{\vee}}-k_{\alpha}}}
{(q)_{\bil{\rho_k}{\alpha^{\vee}}}^2},
\end{equation}
where $\rho_k:=\frac{1}{2}\sum_{\alpha>0} k_{\alpha} \alpha$,
$\bil{\cdot}{\cdot}$ is the standard symmetric bilinear form on 
$R$ and $\alpha^{\vee}=2\alpha/(\alpha,\alpha)$ a coroot.

\begin{proposition}
For positive integers $k_{\alpha}$, constant along Weyl orbits,
\begin{multline}\label{Eq_CT-inhom}
\CT\bigg[\prod_{\alpha>0} 
(1-\eup^{-\alpha})(1-t_{\alpha}\eup^{\alpha})
(q\eup^{-\alpha},q\eup^{\alpha})_{k_{\alpha}-1} \bigg] \\
=W(\bt)\prod_{\alpha>0} 
\frac{(q)_{\bil{\rho_k}{\alpha^{\vee}}+k_{\alpha}-1}
(q)_{\bil{\rho_k}{\alpha^{\vee}}-k_{\alpha}}}
{(q)_{\bil{\rho_k}{\alpha^{\vee}}-1}
(q)_{\bil{\rho_k}{\alpha^{\vee}}}}.
\end{multline}
\end{proposition}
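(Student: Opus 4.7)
The plan is to combine Macdonald's inhomogeneous constant term identity \eqref{Eq_Macdonald-q-CT-inhom} with the expansion \eqref{Eq_expansion}, mimicking on a deeper level the proof of \eqref{Eq_Poincare-CT} given above. Introduce $G:=\prod_{\alpha>0}(1-\eup^{-\alpha})(q\eup^{-\alpha},q\eup^{\alpha})_{k_\alpha-1}$, so that the Macdonald kernel $F:=\prod_{\alpha>0}(\eup^{-\alpha},q\eup^{\alpha})_{k_\alpha}$ appearing in \eqref{Eq_Macdonald-q-CT-inhom} factors as $F=G\prod_{\alpha>0}(1-q^{k_\alpha}\eup^{\alpha})$. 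Expanding $\prod_{\alpha>0}(1-t_\alpha\eup^{\alpha})$ on the left-hand side of \eqref{Eq_CT-inhom} via \eqref{Eq_expansion} and comparing coefficients of the monomials $t_S$ reduces the whole identity to the claim that
\[
\CT\bigg[\eup^{\sum_{\alpha\in S}\alpha}\,G\bigg]=\begin{cases}(-1)^{l(w)}N & \text{if } S=R(w) \text{ for some } w\in W,\\ 0 & \text{otherwise,}\end{cases}
\]
for all $S\subseteq R^+$, where $N$ is the product on the right-hand side of \eqref{Eq_CT-inhom}.

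The functional equation $wG=(-1)^{l(w)}\eup^{\rho-w(\rho)}G$ for every $w\in W$ holds because $w$ permutes $\{\pm\alpha\}_{\alpha>0}$ bijectively, the factor $(q\eup^{-\alpha},q\eup^{\alpha})_{k_\alpha-1}$ is symmetric in $\alpha\leftrightarrow-\alpha$, and $k_\alpha$ is constant along $W$-orbits. Together with the $W$-invariance of $\CT$ and the identity $\rho-w(\rho)=\sum_{\alpha\in R(w)}\alpha$, this yields $\CT[\eup^{\sum_{\alpha\in R(w)}\alpha}G]=(-1)^{l(w)}\CT[G]$, settling the $S=R(w)$ case of the claim modulo the evaluation $\CT[G]=N$. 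Applying the same manipulation with $w=s_\gamma$ the reflection in a positive root $\gamma$ (so that $(-1)^{l(s_\gamma)}=-1$) forces the constant term to vanish whenever $\rho-\sum_{\alpha\in S}\alpha$ lies on the wall $\langle\cdot,\gamma^\vee\rangle=0$. The remaining combinatorial input is that $\rho-\sum_{\alpha\in S}\alpha$ is regular (on no such wall) if and only if $S=R(w)$ for some $w\in W$; this is consistent with the Weyl denominator formula rewritten as $\sum_{S\subseteq R^+}(-1)^{\abs{S}}\eup^{\rho-\sum_{\alpha\in S}\alpha}=\sum_{w\in W}(-1)^{l(w)}\eup^{w(\rho)}$, and follows from the characterisation of inversion sets as bi-closed subsets together with the uniqueness of $R(w)$ as a subset of $R^+$ summing to $\rho-w(\rho)$.

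Finally, the evaluation $\CT[G]=N$ is obtained by running the same reduction on \eqref{Eq_Macdonald-q-CT-inhom} itself: expanding $F=G\prod_{\alpha>0}(1-q^{k_\alpha}\eup^{\alpha})$ via \eqref{Eq_expansion} and invoking the vanishing lemma yields $\CT[F]=\CT[G]\cdot W(q^{k_\bullet})$, where $W(q^{k_\bullet})$ denotes $W(\bt)$ specialised at $t_\alpha=q^{k_\alpha}$. Dividing Macdonald's product formula for $\CT[F]$ by the multivariable Chevalley--Solomon-type identity
\[
W(q^{k_\bullet})=\prod_{\alpha>0}\frac{1-q^{\bil{\rho_k}{\alpha^\vee}+k_\alpha}}{1-q^{\bil{\rho_k}{\alpha^\vee}}}
\]
then simplifies to $N$ after routine $q$-shifted factorial bookkeeping. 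The principal obstacle to this plan is establishing the combinatorial characterisation of regular $\rho-\sum_{\alpha\in S}\alpha$ highlighted in the previous paragraph.
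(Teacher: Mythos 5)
Your route is genuinely different from the paper's. The paper writes the left-hand side as
$\CT\big[\prod_{\alpha>0}(\eup^{-\alpha},\eup^{\alpha})_{k_\alpha}\,
\frac{1-t_\alpha\eup^{-\alpha}}{1-\eup^{-\alpha}}\big]$,
symmetrises over $W$ using invariance of $\CT$ together with Macdonald's formula~\eqref{Eq_Poincare} (which collapses the averaged quotient to the scalar $W(\bt)/\abs{W}$), and then evaluates the remaining constant term via Cherednik's Lemma~4.4 combined with~\eqref{Eq_Macdonald-q-CT-inhom}. You instead expand $\prod(1-t_\alpha\eup^{\alpha})$ by~\eqref{Eq_expansion}, compare coefficients of $t_S$, and rely on the anti-invariance $wG=(-1)^{l(w)}\eup^{\rho-w(\rho)}G$ of the kernel $G=\prod(1-\eup^{-\alpha})(q\eup^{-\alpha},q\eup^{\alpha})_{k_\alpha-1}$. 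This bypasses both Cherednik's lemma and Macdonald's $W$-sum formula~\eqref{Eq_Poincare}, at the cost of needing Macdonald's product evaluation of $W(\bt)$ for $t_\alpha$ constant along orbits.

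There is, however, a real gap, which you partly flag but also partly claim to have settled. The heart of your argument is the vanishing
$\CT\big[\eup^{\sum_{\alpha\in S}\alpha}G\big]=0$ when $S$ is not an inversion set, and you reduce this to the claim that $\rho-\sum_{\alpha\in S}\alpha$ is regular if and only if $S=R(w)$ for some $w\in W$. Your stated justification — ``the characterisation of inversion sets as bi-closed subsets together with the uniqueness of $R(w)$ as a subset of $R^+$ summing to $\rho-w(\rho)$'' — does not yield this: bi-closedness is a characterisation of inversion sets, not of regular weights, and the uniqueness statement only controls those $S$ whose weight already happens to equal some $w(\rho)$, not those whose weight is regular but \emph{a priori} not of the form $w(\rho)$. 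To close the gap one must show that every regular weight of the form $\rho-\sum_{\alpha\in S}\alpha$ already lies in the $W$-orbit of $\rho$. This is true but needs an argument. One way: let $w\in W$ put $\nu:=w^{-1}(\rho-\sum_{\alpha\in S}\alpha)$ in the dominant chamber; since $w^{-1}$ permutes the pairs $\{\pm\alpha\}$, one can again write $\nu=\rho-\sum_{\alpha\in S'}\alpha$ for some $S'\subseteq R^+$. Strict dominance gives $\bil{\beta}{\alpha_i^\vee}\le 0$ for $\beta:=\sum_{\alpha\in S'}\alpha$ and every simple $\alpha_i$, while $\beta$ is a nonnegative integer combination of simple roots, whence $\bil{\beta}{\beta}\le 0$ forces $\beta=0$; so $\nu=\rho$. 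Then the uniqueness of the sign-vector representing the zonotope vertex $2w(\rho)=\sum_{\alpha>0}\pm\alpha$ gives $S=R(w)$, as you intended. A secondary point: the specialised Poincar\'e product you quote, $W(q^{k_\bullet})=\prod_{\alpha>0}(1-q^{\bil{\rho_k}{\alpha^\vee}+k_\alpha})/(1-q^{\bil{\rho_k}{\alpha^\vee}})$, is precisely what the paper \emph{deduces} from~\eqref{Eq_CT-inhom} after the proof; to avoid circularity you should cite Macdonald's \cite[Theorem~2.4]{Macdonald72}, which gives the formally different but equal product $\prod_{\alpha>0}(1-t_\alpha\prod_\beta t_\beta^{\bil{\alpha}{\beta^\vee}/2})/(1-\prod_\beta t_\beta^{\bil{\alpha}{\beta^\vee}/2})$, and note (as the paper does) that the two products agree.
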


\begin{proof}
If we apply \cite[Lemma 4.4]{Cherednik95} with $f$ therein chosen as
$\prod_{\alpha>0}(\eup^{-\alpha},\eup^{\alpha})_{k_{\alpha}}$ and
$k_{\alpha}>0$, then
\[
\CT\bigg[\prod_{\alpha>0}(\eup^{-\alpha},q\eup^{\alpha})_{k_{\alpha}}
\bigg]=\frac{1}{\abs{W}}\, \prod_{\alpha>0} 
\frac{1-q^{\bil{\rho_k}{\alpha^{\vee}}+k_{\alpha}}}
{1-q^{\bil{\rho_k}{\alpha^{\vee}}}}\cdot
\CT\bigg[\prod_{\alpha>0}
(\eup^{-\alpha},\eup^{\alpha})_{k_{\alpha}}\bigg],
\]
where $\abs{W}=d_1\cdots d_r$ is the order of $W$.
Hence \eqref{Eq_Macdonald-q-CT-inhom} for $k_{\alpha}>0$ may be
rewritten as
\begin{equation}\label{Eq_Macdonald-q-CT-inhom-II}
\CT\bigg[\prod_{\alpha>0}
(\eup^{-\alpha},\eup^{\alpha})_{k_{\alpha}} \bigg]
=\abs{W} \prod_{\alpha>0} 
\frac{(q)_{\bil{\rho_k}{\alpha^{\vee}}+k_{\alpha}-1}
(q)_{\bil{\rho_k}{\alpha^{\vee}}-k_{\alpha}}}
{(q)_{\bil{\rho_k}{\alpha^{\vee}}-1}
(q)_{\bil{\rho_k}{\alpha^{\vee}}}}.
\end{equation}
This is of course the $t_{\alpha}=1$ case of \eqref{Eq_CT-inhom}.

Now abbreviate the left-hand side of \eqref{Eq_CT-inhom} by $\CT[\dots]$. 
Then
\[
\CT[\dots]=\CT\bigg[\prod_{\alpha>0}
(\eup^{-\alpha},\eup^{\alpha})_{k_{\alpha}} \,
\frac{1-t_{\alpha}\eup^{\alpha}}{1-\eup^{\alpha}}\bigg]
=\CT\bigg[\prod_{\alpha>0}(\eup^{-\alpha},\eup^{\alpha})_{k_{\alpha}} \,
\frac{1-t_{\alpha}\eup^{-\alpha}}{1-\eup^{-\alpha}}\bigg].
\]
Since acting on the above kernel with the Weyl group does not
affect the constant term, this is also
\[
\CT[\dots]=\frac{1}{\abs{W}}\,\CT\bigg[\prod_{\alpha>0}
(\eup^{-\alpha},\eup^{\alpha})_{k_{\alpha}} \cdot \sum_{w\in W}
\prod_{\alpha>0}
\frac{1-t_{\alpha}\eup^{-w(\alpha)}}{1-\eup^{-w(\alpha)}}
\bigg].
\]
By Macdonald's formula \eqref{Eq_Poincare} the sum over 
$W$ yields $W(\bt)$ so that
\begin{equation}\label{Eq_interm}
\CT[\dots]=\frac{W(\bt)}{\abs{W}}\,\CT\bigg[\prod_{\alpha>0}
(\eup^{-\alpha},\eup^{\alpha})_{k_{\alpha}} \bigg].
\end{equation}
Thanks to \eqref{Eq_Macdonald-q-CT-inhom-II} the proof is done.
\end{proof}

We note that if we set $t_{\alpha}=q^{k_{\alpha}}$ in \eqref{Eq_CT-inhom}
then the constant term on the left coincides with the constant term
in \eqref{Eq_Macdonald-q-CT-inhom}. 
Using the latter identity we thus infer that
\[
W(\bt)|_{t_{\alpha}=q^{k_{\alpha}}}=\prod_{\alpha>0} 
\frac{1-q^{\bil{\rho_k}{\alpha^{\vee}}+k_{\alpha}}}
{1-q^{\bil{\rho_k}{\alpha^{\vee}}}}.
\]
Since this is a rational function identity (polynomial in fact),
$q^{k_{\alpha}}$ may be replaced by $t_{\alpha}$ resulting in
\[
W(\bt)|_{t_{\alpha} \textup{ constant along $W$-orbits}}
=\prod_{\alpha>0} \frac{1-t_{\alpha} 
\prod_{\beta>0} t_{\beta}^{\bil{\beta}{\alpha^{\vee}}/2}}
{1-\prod_{\beta>0} t_{\beta}^{\bil{\beta}{\alpha^{\vee}}/2}}.
\]
Curiously, this product form for a restricted version of the
multivariable Poincar\'e polynomial is slightly  different from 
the one given by Macdonald in \cite[Theorem 2.4]{Macdonald72}:
\[
W(\bt)|_{t_{\alpha} \textup{ constant along $W$-orbits}}
=\prod_{\alpha>0} \frac{1-t_{\alpha} 
\prod_{\beta>0} t_{\beta}^{\bil{\alpha}{\beta^{\vee}}/2}}
{1-\prod_{\beta>0} t_{\beta}^{\bil{\alpha}{\beta^{\vee}}/2}},
\]
although equality of the above two products is readily established.

\section{Theorem~\ref{Thm_Poincare-q-Dyson}}\label{Sec_PqD}

In this section we give a proof Theorem~\ref{Thm_Poincare-q-Dyson} following
the method of the recent proof of the $q$-Dyson conjecture given in 
\cite{KN12}.
We also present a reformulation of the theorem in terms of tournaments,
thus connecting the theorem with results of Bressoud and Goulden.

\subsection{Proof of Theorem~\ref{Thm_Poincare-q-Dyson}}
Write
\[
\CT\bigg[ \prod_{1\leq i<j\leq n}
(x_i/x_j)_{a_i}(qx_j/x_i)_{a_j-1}(1-t_{ij} x_j/x_i)\bigg]=
\sum_{S\subseteq \binom{[n]}{2}}\coef(a;S) t_S,
\]
where the coefficients $\coef(a;S)$ are independent of the $t_{ij}$.
Thus, $\coef(a;S)$ is the coefficient of $\prod_{(i,j)\in S}(x_i/x_j)$ in
the Laurent polynomial  
\[
(-1)^{\abs{S}}\prod_{1\le i<j\le n}(x_i/x_j)_{a_i}(qx_j/x_i)_{a_j-1},
\]
which (recalling the abbreviation $\abs{a}=a_1+\cdots+a_n$) is the same
as the coefficient of the monomial
\[
\prod_{(i,j)\in S}(x_i/x_j)\cdot \prod_{i=1}^n x_i^{\abs{a}-a_i-(n-i)}
\]
in the homogeneous polynomial
\[
F_S(x)=(-1)^{\abs{S}} \prod_{1\le i<j\le n}
\Bigg(\prod_{k=0}^{a_i-1}{\big(x_j-x_iq^k\big)}\cdot
\prod_{k=1}^{a_j-1}{\big(x_i-x_jq^k\big)} \Bigg).
\]
Although the only dependence on $S$ is through the factor $(-1)^{\abs{S}}$,
it has been included to simplify subsequent equations.

To express this coefficient we apply the following simple consequence 
of multivariate Lagrange interpolation, independently formulated by 
Laso\'n \cite{Lason10} and by Karasev and Petrov \cite{KP12}.

\begin{lemma}\label{Lem_interpol} 
Let $\F$ be an arbitrary field and $F\in \F[x_1,x_2,\dots,x_n]$ a
polynomial of degree $\deg(F)\leq d_1+d_2+\cdots+d_n$. 
For arbitrary subsets $A_1,A_2,\dots,A_n$ of $\F$ with 
$\abs{A_i}=d_i+1$, the coefficient of $\prod x_i^{d_i}$ in $F$ is
\[
\sum_{c_1\in A_1} \sum_{c_2\in A_2} \dots \sum_{c_n\in A_n} 
\frac{F(c_1,c_2,\dots,c_n)}{\phi_1'(c_1)\phi_2'(c_2)\cdots\phi_n'(c_n)},
\]
where $\phi_i(z)= \prod_{a\in A_i}(z-a)$.
\end{lemma}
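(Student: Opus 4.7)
My plan is to reduce the multivariate statement to univariate Lagrange interpolation by exploiting linearity and the product structure of the proposed formula. Expand $F$ in the monomial basis,
\[
F(x_1,\dots,x_n)=\sum_e f_e\, x_1^{e_1}\cdots x_n^{e_n},
\]
where the sum ranges over tuples $e=(e_1,\dots,e_n)$ of nonnegative integers with $\abs{e}:=e_1+\cdots+e_n\le d_1+\cdots+d_n$. Both the coefficient functional $[x_1^{d_1}\cdots x_n^{d_n}]$ and the proposed expression
\[
\Lambda(F):=\sum_{c_1\in A_1}\cdots\sum_{c_n\in A_n}\frac{F(c_1,\dots,c_n)}{\phi_1'(c_1)\cdots \phi_n'(c_n)}
\]
are $\F$-linear in $F$, so it suffices to verify the identity on each monomial $x_1^{e_1}\cdots x_n^{e_n}$.

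For a single monomial the sum $\Lambda(x^e)$ factors completely as
\[
\Lambda(x_1^{e_1}\cdots x_n^{e_n})=\prod_{i=1}^n\Bigl(\sum_{c\in A_i}\frac{c^{e_i}}{\phi_i'(c)}\Bigr),
\]
so the entire argument reduces to the univariate case. Univariate Lagrange interpolation, applied to $g(x)=x^{e_i}$ at the $d_i+1$ nodes in $A_i$, gives the identity $\sum_{c\in A_i} g(c)/\phi_i'(c)=[x^{d_i}]g(x)$ whenever $\deg g\le d_i$ (read off the coefficient of $x^{d_i}$ in the Lagrange expansion $g(x)=\sum_c g(c)\phi_i(x)/\bigl[(x-c)\phi_i'(c)\bigr]$). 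Specialising to $g=x^{e_i}$ shows that the $i$-th factor equals $\delta_{e_i,d_i}$ whenever $e_i\le d_i$.

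It remains to combine the pieces. If $e=(d_1,\dots,d_n)$, every factor is $1$ and $\Lambda(x^e)=1=[x^d]x^e$. If instead $e\ne(d_1,\dots,d_n)$ while $\abs{e}\le d_1+\cdots+d_n$, then $e_i\ge d_i$ cannot hold for every $i$ (equality throughout would force $e=d$), so some $e_j<d_j$, the $j$-th factor vanishes, and $\Lambda(x^e)=0=[x^d]x^e$. The role of the hypothesis $\deg F\le d_1+\cdots+d_n$ is exactly to rule out monomials $x^e$ with $e_i>d_i$ for all $i$, which would be the one case left untouched by the univariate identity above; this is the only delicate point of the proof, but it is immediate from the degree bound and I do not anticipate any further obstacle.
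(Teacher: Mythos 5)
The paper does not actually prove this lemma: it is quoted with references to Laso\'n and Karasev--Petrov and then applied, so there is no in-text argument to compare against. Your proof is correct and self-contained, and it is the natural Lagrange-interpolation argument that those sources rest on. The reduction to monomials by linearity is sound, the factorization $\Lambda(x^e)=\prod_i\sum_{c\in A_i}c^{e_i}/\phi_i'(c)$ is an immediate consequence of Fubini on the finite product sum, and the univariate identity $\sum_{c\in A}g(c)/\phi'(c)=[x^{d}]g$ for $\deg g\le d$ is exactly the leading coefficient of the Lagrange expansion since each basis polynomial $\phi(x)/\bigl((x-c)\phi'(c)\bigr)$ is degree $d$ with leading coefficient $1/\phi'(c)$. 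The handling of the degree hypothesis is the one genuinely delicate point and you have treated it correctly: when $e\ne d$ and $\lvert e\rvert\le\lvert d\rvert$, one cannot have $e_i\ge d_i$ for all $i$, so some $e_j<d_j$ forces the $j$-th factor to vanish regardless of whether other exponents overshoot $d_i$ (the univariate identity is not invoked for those factors, and need not be, since a single zero factor suffices). This is a complete and correct proof of the lemma as stated.
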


The idea is to apply this lemma with $\F=\Rat(q)$ and $F=F_S$ and 
with a suitable choice of the sets $A_i$ such that $F_S(\bc)\ne 0$ 
holds for at most one element $\bc\in A_1\times\dots\times A_n$.
This will allow us to express the coefficient $\coef(a;S)$ in a simple 
product form if there is an element $w\in\Symm_n$ such that $S=R(w)$ or
else to prove that $\coef(a;S)=0$.
Note that 
\[
\prod_{(i,j)\in S}(x_i/x_j)\cdot
\prod_{i=1}^n x_i^{\abs{a}-a_i-(n-i)}=
\prod_{i=1}^n x_i^{\abs{a}-a_i-(n-i)-d_i+e_i}, 
\]
where $d_j=\abs{\{i:~(i,j)\in S\}}$ and $e_i=\abs{\{j:~(i,j)\in S\}}$.
Clearly $d_i\le i-1$, $e_i\le n-i$ and, recalling that each $a_i>0$, 
we therefore have
\[
0\le \abs{a}-a_i-(n-i)-d_i+e_i\le\abs{a}-a_i. 
\]
Thus, there exist sets $B_i\subseteq\{0,1,\dots,\abs{a}-a_i\}$
such that
\[
\abs{B_i}=\abs{a}-a_i-(n-i)-d_i+e_i+1.
\]
We will construct the $A_i$ in the form
\[
A_i=\{q^{\alpha_i}:~\alpha_i\in B_i\}\subset \F=\Rat(q)
\]
with appropriate choice of the sets $B_i$. 
Before specifying these sets further, we first note that they have the 
right cardinality for a possible application of 
Lemma~\ref{Lem_interpol}.

Assume that $c_i=q^{\alpha_i}\in A_i$ and $F_S(\bc)\ne 0$.
Then  all $\alpha_i$ are distinct. Moreover, $\alpha_i\ge \alpha_j+a_j$
holds for $\alpha_i> \alpha_j$. Consider the unique permutation 
$\pi\in \Symm_n$ for which 
\[
0\le \alpha_{\pi(1)}< \alpha_{\pi(2)}<\dots< \alpha_{\pi(n)}\le 
\abs{a}-a_{\pi(n)}.
\]
Given that 
\[
\abs{a}-a_{\pi(n)}=
\sum_{i=1}^{n-1}a_{\pi(i)}\le 
\sum_{i=1}^{n-1}\big(\alpha_{\pi(i+1)}-\alpha_{\pi(i)}\big)=
\alpha_{\pi(n)}-\alpha_{\pi(1)}\le \abs{a}-a_{\pi(n)},
\]
it follows that $\alpha_{\pi(i)}=a_{\pi(1)}+\cdots+a_{\pi(i-1)}$ holds for
every $1\le i\le n$.

Consider $\ell_i:=(n-i)+d_i-e_i$. Then $0\le \ell_i\le n-1$.
Denote by $K=K(S)$ the smallest nonnegative integer that does not occur
exactly once among the numbers $\ell_1,\dots,\ell_n$. More formally, $K$
is the largest nonnegative integer $k$ for which
\[
\abs{\{i:~\ell_i=j\}}=1
\]
holds for every integer $0\le j<k$. Such a $K\le n$ clearly exists.
Our construction of the appropriate sets $B_i$ (and hence $A_i$) 
relies on the following lemma, stating that $K$ is not among the $\ell_i$.

\begin{lemma}\label{Lem_elli}
$\abs{\{i:~\ell_i=K\}}=0$.
\end{lemma}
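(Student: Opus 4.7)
The plan is to recognise $\ell_1,\dots,\ell_n$ as the score sequence of a tournament on $[n]$ built from $S$. For each pair $1\le i<j\le n$, orient the edge between $i$ and $j$ from $j$ to $i$ if $(i,j)\in S$ and from $i$ to $j$ otherwise. A short direct count shows that the out-degree of vertex $i$ equals $(n-i-e_i)+d_i=\ell_i$: the first summand counts wins over larger vertices $j$ for which $(i,j)\notin S$, and the second counts wins over smaller vertices $j$ for which $(j,i)\in S$.

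Next, I would exploit the defining property of $K$: each of the values $0,1,\dots,K-1$ is attained exactly once among the $\ell_i$. Set $T:=\{i:\ell_i<K\}$, so $\abs{T}=K$ and $\sum_{i\in T}\ell_i=\binom{K}{2}$. On the other hand, summing out-degrees over any subset $T$ gives $\binom{\abs{T}}{2}$ (the contribution from intra-$T$ edges) plus the number of edges going from $T$ to its complement $T^c$. Comparing the two expressions forces the latter count to vanish, so every $j\in T^c$ beats every vertex of $T$; in particular $\ell_j\ge K$ for all $j\notin T$.

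Finally, I would rule out $\abs{\{j:\ell_j=K\}}\ge 2$. If $\ell_{j_1}=\ell_{j_2}=K$ for distinct $j_1,j_2\in T^c$, then each of $j_1,j_2$ achieves its $K$ wins entirely against members of $T$, so neither can beat the other; but one of them must in their direct match, a contradiction. Hence $\abs{\{j:\ell_j=K\}}\le 1$, and by the definition of $K$ this cardinality cannot equal $1$, so it must equal $0$. The only conceptual step is the tournament reformulation; after that the argument reduces to the elementary Landau-type observation that $\sum_{i\in T}\ell_i\ge\binom{\abs{T}}{2}$, with equality forcing $T$ to be a dominated subset, so I do not foresee a substantive obstacle.
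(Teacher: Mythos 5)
Your proposal is correct, and it takes a genuinely different route from the paper. The paper proves Lemma~\ref{Lem_elli} by induction on $n$: it treats $K=0$ directly, and for $K>0$ it locates the unique $\alpha$ with $\ell_\alpha=0$, deletes it to form a reduced set $S'\subseteq\binom{[n-1]}{2}$, verifies $\ell'_i=\ell_{i+\chi(i\ge\alpha)}-1$ and $K'=K-1$, and applies the inductive hypothesis. Your argument is instead a one-shot tournament-score computation: you recognise $\ell_i$ as the out-degree of vertex $i$ in the tournament obtained by orienting $j\to i$ when $(i,j)\in S$ and $i\to j$ otherwise, observe that $T=\{i:\ell_i<K\}$ has $\abs{T}=K$ with $\sum_{i\in T}\ell_i=\binom{K}{2}$, and deduce from the Landau-type identity $\sum_{i\in T}\ell_i=\binom{\abs{T}}{2}+\#\{\text{edges } T\to T^c\}$ that $T$ is a dominated subset, after which two vertices of out-degree exactly $K$ would have to beat each other, a contradiction. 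Both arguments are elementary, but yours avoids induction entirely and is arguably more transparent; it also dovetails nicely with the tournament reformulation the paper develops later in Section~\ref{Sec_PqD}. The paper's inductive construction of $S'$ does, however, get reused verbatim in the proof of Lemma~\ref{Lem_bijection}, so the induction is not entirely wasted effort in the paper's overall economy. One small presentational point: you should note explicitly that $j_1,j_2$ in the final step lie in $T^c$ (which is immediate from $\ell_{j_1}=\ell_{j_2}=K\ge K$), and that the argument covers $K=0$ vacuously, with $T=\emptyset$.
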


\begin{proof}
The proof is by induction on $n$. For the initial step $n=2$ we have
$\ell_2=0$, $\ell_1=1$ if $S=\emptyset$ and $\ell_1=0$, $\ell_2=1$ if 
$S=\{(1,2)\}$. In both cases $K=2$ and the conclusion trivially holds.
So assume that $n\ge 3$ and the statement has been proven for smaller
values of $n$. First consider the case $K=0$, meaning
$\abs{\{i:~\ell_i=0\}}\ne 1$. 
It is enough to show that $\abs{\{i:~\ell_i=0\}}<2$. 
Suppose that on the contrary, $\ell_i=\ell_j=0$ holds for some 
$1\le i<j\le n$.
This means that $d_i=d_j=0$, $e_i=n-i$ and $e_j=n-j$. 
However, if $(i,j)\in S$, then $d_j>0$, whereas if $(i,j)\not\in S$,
then $e_i<n-i$, a contradiction.

Turning to the general case $K>0$, by definition there is a unique 
$1\le \alpha\le n$ such that $\ell_\alpha=0$, whence $d_\alpha=0$
and $e_\alpha=n-\alpha$. 
Construct $S'\subseteq \binom{[n-1]}{2}$ induced by $S$ as follows.
First, for $1\le i<j<\alpha$ let $(i,j)\in S'$ if and only if 
$(i,j)\in S$. 
Next, for $1\le i<\alpha\le j\le n-1$ let $(i,j)\in S'$ if and only if 
$(i,j+1)\in S$.
Finally, for $\alpha\le i<j\le n-1$ let $(i,j)\in S'$ if and only if 
$(i+1,j+1)\in S$.
For this new set $S'$ we have
\begin{align*}
d'_j&=\abs{\{i:~(i,j)\in S'\}}
=d_{j+\chi(j\ge \alpha)}-\chi(j\ge \alpha) \\
\intertext{and}
e'_i&=\abs{\{j:~(i,j)\in S'\}}=e_{i+\chi(i\ge \alpha)},
\end{align*}
where the indicator $\chi(\mathcal{T})$ is $1$ if $\mathcal{T}$ is true 
and $0$ otherwise.
Consequently,
\[
\ell'_i=\big((n-1)-i\big)+d'_i-e'_i=\ell_{i+\chi(i\ge \alpha)}-1
\]
and
\[
K'=\max\big\{k\ge 0:~\abs{\{i:~\ell'_i=j\}}=1
\text{ for every } 0\le j<k\big\}=K-1\le n-1.
\]
It follows from the induction hypothesis that 
$\abs{\{i:~\ell'_i=K'\}}=0$.
Since $\ell_\alpha=0\ne K$, this is equivalent to the statement 
$\abs{\{i:~\ell_i=K\}}=0$.
\end{proof}

Now define the sets $B_i$ satisfying 
$B_i\subseteq\{0,1,\dots,\abs{a}-a_i\}$ and  
$\abs{B_i}=\abs{a}-a_i-\ell_i+1$ as follows. 
For $k=1,2,\dots,K$ write $\tau(k)$ for the unique integer $i$ for which
$\ell_i=k-1$.
Let $B_{\tau(1)}=\{0,1,\dots,\abs{a}-a_{\tau(1)}\}$. 
For $2\le i\le K$ let 
\[
B_{\tau(i)}=\{0,1,\dots,\abs{a}-a_{\tau(i)}\}\setminus
\bigg\{\abs{a}-a_{\tau(i)}-
\sum_{j=1}^k a_{\tau(j)}:~0\le k\le i-2\bigg\}.
\]
Finally, if $i\not\in \{\tau(1),\dots,\tau(K)\}$ then $\ell_i\ge K+1$
is implied by Lemma~\ref{Lem_elli} and therefore we may choose $B_i$ as
an arbitrary $(\abs{a}-a_i-\ell_i+1)$-element subset of
\[
\{0,1,\dots,\abs{a}-a_{\tau(i)}\}\setminus 
\bigg\{\abs{a}-a_{\tau(i)}-
\sum_{j=1}^k a_{\tau(j)}:~0\le k\le K \bigg\}.
\]
Sets $B_i$ thus defined, now consider 
$A_i=\{q^{\alpha_i}:~\alpha_i\in B_i\}$. 
Assume that $c_i=q^{\alpha_i}\in A_i$ and $F_S(\bc)\ne 0$.
As we have seen, for such nonvanishing $\bc$ there is a unique 
permutation $\pi=\pi_{\bc}\in\Symm_n$ such that
\[
\alpha_{\pi(i)}=a_{\pi(1)}+\cdots+a_{\pi(i-1)}
=\abs{a}-a_{\pi(i)}-a_{\pi(i+1)}-\cdots-a_{\pi(n)}
\]
holds for every $1\le i\le n$. 
We will show that such a $\bc$ exists if and only if $K=n$ and 
$\pi(i)=\tau(n-i+1)$ for every $1\le i\le n$. 
First of all, given that 
$\alpha_{\pi(n)}=\abs{a}-a_{\pi(n)}\in B_{\pi(n)}$ it follows that 
$K\ge 1$ and $\pi(n)=\tau(1)$.
Suppose that for some $1\le k<n$ it has been already verified that 
$K\ge k$ and that $\pi(n-i+1)=\tau(i)$ holds for every $1\le i\le k$.
Consider
\[
\alpha_{\pi(n-k)}=\abs{a}-a_{\pi(n-k)}-a_{\pi(n-k+1)}-\cdots-a_{\pi(n)}
=\abs{a}-a_{\pi(n-k)}-a_{\tau(1)}-\cdots-a_{\tau(k)}.
\]
Given that $\alpha_{\pi(n-k)}\in B_{\pi(n-k)}$ it follows that 
$K\ge k+1$ and $\pi((n+1)-(k+1))=\pi(n-k)=\tau(k+1)$.
Thus our claim follows by induction on $k$.

In summary, $K=K(S)<n$ implies $F(\bc)= 0$ for all
$\bc\in A_1\times\dots\times A_n$.
In view of Lemma~\ref{Lem_interpol} it is immediate that $\coef(a;S)=0$
unless $K=n$. 
In the latter case there is a unique $\bc\in A_1\times\dots\times A_n$
for which $F_S(\bc)\ne 0$, to which corresponds a unique permutation 
$\pi=\pi_{\bc}=\pi_S\in\Symm_n$.
First we show that $S=R(w)$ for a suitable element $w\in\Symm_n$. 

\begin{lemma}\label{Lem_bijection}
Suppose that $K=K(S)=n$ holds for a set $S\subseteq \binom{[n]}{2}$.
Then there exists a permutation $w\in\Symm_n$ such that $S=R(w)$.
Conversely, for any permutation $w\in\Symm_n$, the set $S=R(w)$
satisfies $K=n$ with $\ell_{w(j)}=n-j$.
\end{lemma}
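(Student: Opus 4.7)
The plan is to treat the two directions separately, doing the converse first (which is a direct count) since the outcome $\ell_{w(j)} = n-j$ pins down the right bijection, and then establishing the forward direction by induction on $n$ using the same reduction as in the proof of Lemma~\ref{Lem_elli}.

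For the converse, fix $w \in \Symm_n$ and let $S = R(w)$, so $(a,b) \in S$ iff $a < b$ and $w^{-1}(a) > w^{-1}(b)$. A direct enumeration shows that $d_i = \abs{\{k < i : w^{-1}(k) > w^{-1}(i)\}}$ and $e_i = \abs{\{k > i : w^{-1}(k) < w^{-1}(i)\}}$. Combined with the split $n-i = \abs{\{k > i : w^{-1}(k) > w^{-1}(i)\}} + \abs{\{k > i : w^{-1}(k) < w^{-1}(i)\}}$, this reduces $\ell_i = (n-i) + d_i - e_i$ to $\abs{\{k : w^{-1}(k) > w^{-1}(i)\}} = n - w^{-1}(i)$. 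Setting $i = w(j)$ gives $\ell_{w(j)} = n-j$, so $\{\ell_1,\dots,\ell_n\}=\{0,1,\dots,n-1\}$ as a multiset and hence $K=n$.

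For the forward direction I induct on $n$, the case $n=1$ being trivial. For the step, suppose $K(S)=n$, so $\ell$ is a permutation of $\{0,\dots,n-1\}$; let $\alpha$ be the unique index with $\ell_\alpha = 0$. Then $d_\alpha = 0$ and $e_\alpha = n-\alpha$, forcing $(i,\alpha)\notin S$ for all $i<\alpha$ and $(\alpha,j)\in S$ for all $j>\alpha$. Construct $S' \subseteq \binom{[n-1]}{2}$ by the same ``delete index $\alpha$'' recipe as in Lemma~\ref{Lem_elli}; the computation there yields $\ell'_i = \ell_{i+\chi(i\geq\alpha)} - 1$ and hence $K(S')=n-1$. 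By the inductive hypothesis there exists $w' \in \Symm_{n-1}$ with $S' = R(w')$. Lift $w'$ to $w \in \Symm_n$ by declaring $w(n)=\alpha$ and $w(j) = w'(j) + \chi(w'(j)\geq\alpha)$ for $j<n$, which is evidently a bijection $[n]\to[n]$.

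The main obstacle is the verification $S = R(w)$, which I expect to split into three cases. For pairs of the form $(i,\alpha)$ with $i<\alpha$ or $(\alpha,j)$ with $j>\alpha$, since $w^{-1}(\alpha)=n$ is maximal, membership in $R(w)$ matches exactly the forced values of $S$ identified above. For pairs $(a,b)$ with $a,b\ne\alpha$, setting $a' = a - \chi(a>\alpha)$ and $b' = b - \chi(b>\alpha)$, the construction of $w$ gives $w^{-1}(a) = (w')^{-1}(a')$ and analogously for $b$, so $(a,b)\in R(w)$ iff $(a',b')\in R(w')=S'$ iff $(a,b)\in S$. The only real subtlety is keeping the two indicator shifts consistent between the construction of $w$ from $w'$ and the construction of $S'$ from $S$, but these are the same shifts already handled in Lemma~\ref{Lem_elli}, so the argument goes through in parallel.
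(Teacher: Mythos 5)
Your proof is correct, and it diverges from the paper's in a worthwhile way on the converse. For the forward direction you follow the same induction the paper uses: peel off the unique index $\alpha$ with $\ell_\alpha=0$, build $S'\subseteq\binom{[n-1]}{2}$ by the deletion recipe from Lemma~\ref{Lem_elli}, pull back $w'\in\Symm_{n-1}$ with $S'=R(w')$, and lift via $w(n)=\alpha$, $w(j)=w'(j)+\chi\bigl(w'(j)\geq\alpha\bigr)$; the only difference is that you spell out the three-case verification $S=R(w)$, which the paper dismisses as ``easy to check.'' For the converse, however, the paper inducts on $l(w)$ starting from $w=\id$, tracking how the vector $\ell$ changes under an adjacent transposition, and even then leaves the $w(i)>w(i+1)$ subcase and the identity $\ell'_{w'(i+1)}=n-i-1$ to the reader. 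You instead prove the closed-form identity $\ell_i=n-w^{-1}(i)$ directly, by writing $d_i=\abs{\{k<i:w^{-1}(k)>w^{-1}(i)\}}$, $e_i=\abs{\{k>i:w^{-1}(k)<w^{-1}(i)\}}$, splitting $n-i$ as $\abs{\{k>i:w^{-1}(k)>w^{-1}(i)\}}+\abs{\{k>i:w^{-1}(k)<w^{-1}(i)\}}$, and cancelling against $e_i$ to get $\ell_i=\abs{\{k:w^{-1}(k)>w^{-1}(i)\}}=n-w^{-1}(i)$. This one-line enumeration makes $\ell_{w(j)}=n-j$ immediate with no case analysis, so it is shorter and arguably more illuminating than the paper's transposition argument; the latter is closer in spirit to how $R(w)$ is manipulated elsewhere, but as a proof of this lemma your direct count is cleaner.
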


\begin{proof}
We prove the first statement by induction on $n$.
Recall that $K=n$ holds if and only if 
$\{\ell_1,\ell_2,\dots,\ell_n\}=\{0,1,\dots,n-1\}$.
The initial case $n=2$ is obvious: $\ell_1=0,\ell_2=1$ if and only if 
$S=R(21)$ whereas $\ell_2=0,\ell_1=1$ if and only if $S=R(12)$.
Next assume that $n\ge 3$ and that we have already established
the claim for smaller values of $n$. 
Let $\tau(1)=\alpha$ and construct the set $S'\subseteq\binom{[n-1]}{2}$
as in the proof of Lemma~\ref{Lem_elli}. 
Then $K'=K(S')=n-1$ and $\{\ell'_1,\dots,\ell'_{n-1}\}=\{0,\dots,n-2\}$, 
so there exists a permutation $w'\in\Symm_{n-1}$ such that $S'=R(w')$.
To construct $w\in\Symm_n$, let $w(n)=\alpha$ and
$w(i)=w'(i)+\chi\big(w'(i)\ge \alpha\big)$.
It is easy to check that $S=R(w)$. 

For the identity permutation $w=\id$ we have $S=R(w)=\emptyset$, 
$d_j=e_j=0$ and thus $\ell_{w(j)}=\ell_j=n-j$ for every $1\le j\le n$.
So to verify the converse statement it is enough to show that if it is
true for some permutation $w\in\Symm_n$, then it also holds for the
permutation $w'$ obtained from $w$ by the transposition of $w(i)$ and 
$w(i+1)$ for some $1\le i\le n-1$.
Obviously, $\ell'_{w'(j)}=\ell'_{w(j)}=\ell_{w(j)}=n-j$ holds for 
$j\not\in \{i,i+1\}$.
Next we verify $\ell'_{w'(i)}=n-i$. 
If $w(i)<w(i+1)$, then indeed
\begin{align*}
\ell'_{w'(i)}&=\big(n-w'(i)\big)+d'_{w'(i)}-e'_{w'(i)}\\
&=(n-w(i+1))+(d_{w(i+1)}+1)-e_{w(i+1)}\\
&=\ell_{w(i+1)}+1\\
&=n-(i+1)+1.
\end{align*}
A similar argument works for the case $w(i)>w(i+1)$, which we leave to
the reader along with the verification of $\ell'_{w'(i+1)}=n-i-1$.
\end{proof}

Consider $\mathcal{S}:=\big\{S\subseteq \binom{[n]}{2}:~K(S)=n\big\}$. 
It follows from Lemma~\ref{Lem_bijection} that the map $w\to R(w)$
defines a bijection from $\Symm_n$ to $\mathcal{S}$.
In view of $\ell_{\tau(k)}=k-1$ and $\pi(n-i+1)=\tau(i)$, its inverse is
given by $S\to \pi_S$.
Thus, in order to complete the proof of 
Theorem~\ref{Thm_Poincare-q-Dyson} it only remains to show that
\[
\coef\big(a;R(\pi)\big)=
\prod_{i=1}^n \qbin{\A_i-1}{a_i-1}
\frac{1-q^{\A_i}}{1-q^{\pi(\A_i)}}.
\] 
By \eqref{Eq_qmultinomial} this is equivalent to showing that
\[
\coef\big(a;R(\pi)\big)=\qbin{\abs{a}}{a}
\prod_{i=1}^n \frac{1-q^{a_i}}{1-q^{a_{\pi(1)}+\cdots+a_{\pi(i)}}}.
\] 
According to Lemma~\ref{Lem_interpol},
\[
\coef\big(a;R(\pi)\big)=\frac{F_{R(\pi)}(c_1,c_2,\dots,c_n)}
{\phi_1'(c_1)\phi_2'(c_2)\cdots\phi_n'(c_n)},
\]
where $\phi_i(z)= \prod_{a\in A_i}(z-a)$ and $c_i=q^{\alpha_i}$, with
$\alpha_{\pi(i)}=a_{\pi(1)}+\cdots+a_{\pi(i-1)}$.
Define $s_1,\dots,s_{n+1}$ by
\begin{gather*}
s_i:=\alpha_{\pi(i)}=a_{\pi(1)}+\cdots+a_{\pi(i-1)}=
\abs{a}-a_{\pi(i)}-a_{\pi(i+1)}-\cdots-a_{\pi(n)}, \\[2mm]
s=s_{n+1}:=a_{\pi(1)}+\cdots+a_{\pi(n)}=\abs{a}
\end{gather*}
so that $s_{i+1}=s_i+a_{\pi(i)}$.
In view of the definitions of $A_i$, $B_{\tau(i)}$ and the relation 
$\pi(i)=\tau(n-i+1)$ we have 
\[
\phi_{\pi(i)}(z)=
\frac{\displaystyle{\prod_{l=0}^{\abs{a}-a_{\pi(i)}}\big(z-q^l\big)}}
{\displaystyle{\prod_{j=i+2}^{n+1}\Big(z-q^{\abs{a}-a_{\pi(i)}-
(a_{\pi(j)}+a_{\pi(j+1)}+\cdots+a_{\pi(n)})}\Big)}}.
\]
Therefore
\begin{align*}
\phi_{\pi(i)}'(q^{\alpha_{\pi(i)}})&=
\frac{\displaystyle{\prod_{l=0}^{\alpha_{\pi(i)}-1}
\big(q^{\alpha_{\pi(i)}}-q^l\big)}
\cdot \displaystyle{\prod_{l=\alpha_{\pi(i)}+1}^{s-a_{\pi(i)}}
\big(q^{\alpha_{\pi(i)}}-q^l\big)}}
{\displaystyle{\prod_{j=i+2}^{n+1}\Big(q^{\alpha_{\pi(i)}}-
q^{s-a_{\pi(i)}-(a_{\pi(j)}+a_{\pi(j+1)}+\cdots+a_{\pi(n)})}\Big)}}\\
&=\frac{\displaystyle{\prod_{l=0}^{s_i-1}q^l\big(q^{s_i-l}-1\big)}
\cdot \displaystyle{\prod_{l=s_i+1}^{s-a_{\pi(i)}}q^{s_i}
\big(1-q^{l-s_i}\big)}}
{\displaystyle{\prod_{j=i+1}^n q^{\alpha_{\pi(i)}}\Big(1-
q^{a_{\pi(i+1)}+a_{\pi(i+2)}+\cdots+a_{\pi(j)}}\Big)}}\\
&=\frac{(-1)^{s_i}q^{t_i}(q)_{s_i}(q)_{s-s_{i+1}}}
{\displaystyle{\prod_{j=i+1}^n\big(1-q^{s_{j+1}-s_{i+1}}\big)}}
\end{align*}
with $t_i=\binom{s_i}{2}+s_i(s-s_{i+1})-(n-i)s_i$. 
Next we split $F_{R(\pi)}(q^{\alpha_1},q^{\alpha_2},\dots,q^{\alpha_n})$ 
into the product $(-1)^{\abs{R(\pi)}}\Pi_< \, \Pi_>$, where
\begin{align*}
\Pi_<&:=\prod_{\substack{\pi(u)<\pi(v)\\u<v}}
\bigg(\,\prod_{k=0}^{a_{\pi(u)}-1}\big(q^{\alpha_{\pi(v)}}-
q^{\alpha_{\pi(u)}+k}\big)\cdot
\prod_{k=1}^{a_{\pi(v)}-1}\big(q^{\alpha_{\pi(u)}}-
q^{\alpha_{\pi(v)}+k}\big) \bigg)\\
&=\prod_{\substack{\pi(u)<\pi(v)\\u<v}}
\bigg(\,\prod_{k=0}^{a_{\pi(u)}-1}q^{s_u+k}\big(q^{s_v-s_u-k}-1\big)
\cdot \prod_{k=1}^{a_{\pi(v)}-1}q^{s_u}\big(1-q^{s_v-s_u+k}\big) \bigg),
\end{align*}
and
\begin{align*}
\Pi_>&=\prod_{\substack{\pi(u)<\pi(v)\\u>v}}
\bigg(\,\prod_{k=0}^{a_{\pi(u)}-1}\big(q^{\alpha_{\pi(v)}}-
q^{\alpha_{\pi(u)}+k}\big)\cdot
\prod_{k=1}^{a_{\pi(v)}-1}\big(q^{\alpha_{\pi(u)}}-
q^{\alpha_{\pi(v)}+k}\big) \bigg)\\
&=\prod_{\substack{\pi(u)<\pi(v)\\u>v}}
\bigg(\,\prod_{k=0}^{a_{\pi(u)}-1}q^{s_v}\big(1-q^{s_u-s_v+k}\big)
\cdot \prod_{k=1}^{a_{\pi(v)}-1}q^{s_v+k}\big(q^{s_u-s_v-k}-1\big)
\bigg).
\end{align*}
Thus,
\begin{align*}
\Pi_<&=(-1)^{s_{<}}q^{t_{<}}\prod_{\substack{i<j \\ \pi(i)<\pi(j)}}
\bigg(\frac{(q)_{s_j-s_i}}{(q)_{s_j-s_{i+1}}}\cdot
\frac{(q)_{s_{j+1}-s_i-1}}{(q)_{s_j-s_i}}\bigg)\\
&=(-1)^{s_{<}}q^{t_{<}}\prod_{\substack{i<j \\ \pi(i)<\pi(j)}}
\bigg(\frac{1}{1-q^{s_{j+1}-s_i}}\cdot
\frac{(q)_{s_{j+1}-s_i}}{(q)_{s_j-s_{i+1}}}\bigg)
\end{align*}
holds with 
\begin{align*}
s_{<}&=\sum_{\substack{i<j \\ \pi(i)<\pi(j)}}a_{\pi(i)}, &
t_{<}&=\sum_{\substack{i<j \\ \pi(i)<\pi(j)}}
\bigg( \binom{a_{\pi(i)}}{2}+(a_{\pi(i)}+a_{\pi(j)}-1)s_i \bigg). \\
\intertext{Similarly, with the notation}
s_{>}&=\sum_{\substack{i<j \\ \pi(i)>\pi(j)}}(a_{\pi(i)}-1), &
t_{>}&=\sum_{\substack{i<j \\ \pi(i)>\pi(j)}}\bigg(
\binom{a_{\pi(i)}}{2}+(a_{\pi(i)}+a_{\pi(j)}-1)s_i \bigg)
\end{align*}
we can rewrite $\Pi_>$ as 
\[
\Pi_{>}=(-1)^{s_{>}}q^{t_{>}}\prod_{\substack{i<j \\ \pi(i)>\pi(j)}}
\bigg(\frac{1}{1-q^{s_{j+1}-s_i}}\cdot
\frac{(q)_{s_{j+1}-s_i}}{(q)_{s_j-s_{i+1}}}\bigg).
\]
Consequently,
\begin{align*}
F_{R(\pi)}&(q^{\alpha_1},q^{\alpha_2},\dots,q^{\alpha_n})\\
&=(-1)^{\abs{R(\pi)}+s_{<}+s_{>}}q^{t_{<}+t_{>}}
\prod_{1\le i<j\le n}\bigg(\frac{1}{1-q^{s_{j+1}-s_i}}\cdot
\frac{(q)_{s_{j+1}-s_i}}{(q)_{s_j-s_{i+1}}}\bigg)\\
&=(-1)^{\sum_{i=1}^n s_i} q^{t_{<}+t_{>}}
\prod_{1\le i<j\le n}\frac{1}{1-q^{s_{j+1}-s_i}}\cdot
\prod_{i=1}^n\frac{(q)_{s_i}(q)_{s-s_i}}{(q)_{s_{i+1}-s_i}} ,
\end{align*}
where we have used that $s_{n+1}=s$, $s_1=0$ and
$\abs{R(\pi)}+s_{<}+s_{>}=\sum_{i<j} a_{\pi(i)}=
\sum_{j=1}^n s_j$.
It thus follows that in the expression
\[
\coef\big(a;R(\pi)\big)=
\frac{F_{R(\pi)}(q^{\alpha_1},q^{\alpha_2},\dots,q^{\alpha_n})}
{\phi_1'(q^{\alpha_1})\phi_2'(q^{\alpha_2})\cdots \phi_n'(q^{\alpha_n})}
\]
the powers of $-1$ cancel out. So do the powers of $q$, according to
the following observation.

\begin{claim}
$t_{<}+t_{>}=\sum_{i=1}^n t_i$.
\end{claim}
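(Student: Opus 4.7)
The plan is a direct bookkeeping computation, since the restrictions $\pi(i)<\pi(j)$ and $\pi(i)>\pi(j)$ appearing in the definitions of $t_{<}$ and $t_{>}$ partition the set of all pairs $i<j$, and the summand is the same in both cases. I would first merge the two sums into
\[
t_{<}+t_{>}=\sum_{1\le i<j\le n}\binom{a_{\pi(i)}}{2}
+\sum_{1\le i<j\le n}(a_{\pi(i)}+a_{\pi(j)}-1)s_i,
\]
so the dependence on $\pi$ has disappeared except through the running quantities $s_i$ and $a_{\pi(i)}$.

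Next, I would carry out the inner sum over $j>i$ in the second term. Using $\sum_{j>i}a_{\pi(j)}=s-s_{i+1}$, $\sum_{j>i}a_{\pi(i)}=(n-i)a_{\pi(i)}$ and $\sum_{j>i}1=n-i$, this rewrites $t_{<}+t_{>}$ as
\[
\sum_{i=1}^{n}(n-i)\binom{a_{\pi(i)}}{2}
+\sum_{i=1}^{n}s_i\bigl((n-i)a_{\pi(i)}+(s-s_{i+1})-(n-i)\bigr).
\]
On the other side, by the definition of $t_i$,
\[
\sum_{i=1}^{n}t_i=\sum_{i=1}^{n}\binom{s_i}{2}
+\sum_{i=1}^{n}s_i(s-s_{i+1})-\sum_{i=1}^{n}(n-i)s_i.
\]
The matching terms $\sum s_i(s-s_{i+1})$ and $-\sum(n-i)s_i$ cancel on both sides, so the claim reduces to
\[
\sum_{i=1}^{n}(n-i)\Bigl(\binom{a_{\pi(i)}}{2}+s_i a_{\pi(i)}\Bigr)
=\sum_{i=1}^{n}\binom{s_i}{2}.
\]

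The key identity for the last step is the Pascal-type expansion $\binom{s_{i+1}}{2}-\binom{s_i}{2}=s_i a_{\pi(i)}+\binom{a_{\pi(i)}}{2}$, which follows immediately from $s_{i+1}=s_i+a_{\pi(i)}$. Substituting this, the left-hand side becomes $\sum_{i=1}^{n}(n-i)\bigl(\binom{s_{i+1}}{2}-\binom{s_i}{2}\bigr)$, and an index shift $j=i+1$ together with the boundary values $s_1=0$ (so $\binom{s_1}{2}=0$) and the vanishing coefficient $n-n=0$ at the top telescopes this to $\sum_{i=1}^{n}\binom{s_i}{2}$, as required. No step should present a real obstacle; the only thing one has to be careful about is handling the boundary contributions correctly when shifting the index in the telescoping step.
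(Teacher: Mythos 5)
Your proof is correct and follows essentially the same route as the paper: merge the two sums over $i<j$, evaluate the inner sums using $\sum_{j>i}a_{\pi(j)}=s-s_{i+1}$, pass from $a_{\pi(i)}$ to $s_{i+1}-s_i$ via the Pascal-type identity $\binom{s_{i+1}}{2}-\binom{s_i}{2}=\binom{a_{\pi(i)}}{2}+s_ia_{\pi(i)}$, and telescope using $s_1=0$. The only cosmetic difference is organisational: you cancel the shared terms $\sum_i s_i(s-s_{i+1})$ and $-\sum_i(n-i)s_i$ from both sides before invoking the binomial expansion, whereas the paper rewrites $t_<+t_>$ entirely into the form $\sum_i[(n-i)(\binom{s_{i+1}}{2}-\binom{s_i}{2})-(n-i)s_i+s_i(s-s_{i+1})]$ and then telescopes; and you state the binomial identity in additive form, while the paper cites $\binom{a-b}{2}=\binom{a}{2}+\binom{b}{2}+(1-a)b$.
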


\begin{proof}
Using
\[
\sum_{1\leq i<j\leq n} a_{\pi(j)}s_i=
\sum_{i=1}^n s_i\sum_{j=i+1}^n (s_{j+1}-s_j)=
\sum_{i=1}^n s_i(s-s_{i+1}),
\]
eliminating all other $a_{\pi(i)}$ by $a_{\pi(i)}=s_{i+1}-s_i$ and
finally using $\binom{a-b}{2}=\binom{a}{2}+\binom{b}{2}+(1-a)b$ 
we find
\[
t_{<}+t_{>}=\sum_{i=1}^n
\bigg[ (n-i)\bigg(\binom{s_{i+1}}{2}-\binom{s_i}{2}\bigg)
-(n-i)s_i+s_i(s-s_{i+1})\bigg].
\]
Recalling that $s_1=0$, the sum over the binomial coefficients telescopes to 
$\sum_i \binom{s_i}{2}$, establishing the claim.
\end{proof}

\noindent
Finally,
\begin{align*}
\coef(a; R(\pi))&=\frac{\displaystyle \prod_{1\le i<j\le n}
\frac{1}{1-q^{s_{j+1}-s_i}}\cdot\prod_{i=1}^n
\frac{(q)_{s_i}(q)_{s-s_i}}{(q)_{s_{i+1}-s_i}}}
{\displaystyle{\prod_{i=1}^n \frac{(q)_{s_i}(q)_{s-s_{i+1}}}
{\displaystyle{\prod_{j=i+1}^n\big(1-q^{s_{j+1}-s_{i+1}}\big)}}}}\\
&=\prod_{1\le i<j\le n}\frac{1-q^{s_{j+1}-s_{i+1}}}{1-q^{s_{j+1}-s_i}}
\cdot\prod_{i=1}^n\frac{(q)_{s-s_i}}{(q)_{s-s_{i+1}}(q)_{s_{i+1}-s_i}}\\
&=\prod_{i=1}^n\frac{1-q^{s_{i+1}-s_i}}{1-q^{s_{i+1}}}\cdot
\frac{(q)_s}{\displaystyle{\prod_{i=1}^n(q)_{a_{\pi(i)}}}}\\
&=\qbin{\abs{a}}{a}
\prod_{i=1}^n \frac{1-q^{a_i}}{1-q^{a_{\pi(1)}+\cdots+a_{\pi(i)}}}
\end{align*}
as required.

\subsection{Tournaments}

Recall from the introduction that $I(w)$ denotes the inversion set of a
permutation $w$ and $R(w)=I(w^{-1})$. In particular, $R(w)$ contains
all pairs of integers in the permutation $w$ that are not in natural order.
For example, if $w=(3,4,6,1,5,7,2)$ then 
\begin{align*}
I(w)&=\{(1,4),(1,7),(2,4),(2,7),(3,4),(3,5),(3,7),(5,7),(6,7)\}, \\
R(w)&=\{(1,3),(1,4),(1,6),(2,3),(2,4),(2,5),(2,6),(2,7),(5,6)\}.
\end{align*}

Now let $T$ be a tournament on $n$ vertices \cite{Berge71,Stanley97}.
That is, $T$ is a directed complete graph on $n$ labelled vertices 
$1,\dots,n$ (thought of as players).
The interpretation of a directed edge from vertex $i$ to 
vertex $j$ (also written as $i\to j$) is that (player) $i$ beats (player) $j$.
If $i\to j$ we also write $(i,j)\in T$.
A tournament is transitive if $i\to j$ and $j\to k$ implies
that $i\to k$. The winner permutation $w_{\textup{win}}$ 
of a transitive tournament
is a ranking of the vertices (players) from best to worst.
The set $R(w_{\textup{win}})$ precisely contains 
those edges of $T$ which have been reversed 
compared to the edges of the tournament $1\to 2\to \cdots \to n$.

To reformulate Theorem~\ref{Thm_Poincare-q-Dyson} in terms
of tournaments we repeat the expansion \eqref{Eq_expansion}. 
That is, we use
\[
\prod_{1\leq i<j\leq n}
(1-t_{ij} x_j/x_i) =\sum_{S\subseteq \binom{[n]}{2}}(-1)^{\abs{S}} t_S
\prod_{(i,j)\in S} x_j/x_i,
\]
with $t_S=\prod_{(i,j)\in S} t_{ij}$. 
Then equating coefficients of $t_S$ in \eqref{Eq_Poincare-q-Dyson} and
using that
\[
x_j/x_i \,
(x_i/x_j)_{a_i}(qx_j/x_i)_{a_j-1}=-(x_j/x_i)_{a_j}(qx_i/x_j)_{a_i-1}
\]
yields, for any $S\subseteq \binom{[n]}{2}$,
\begin{multline*}
\CT\bigg[ 
\prod_{(i,j)\in S} (x_j/x_i)_{a_j}(qx_i/x_j)_{a_i-1}
\prod_{(i,j)\in \bar{S}} (x_i/x_j)_{a_i}(qx_j/x_i)_{a_j-1}
\bigg] \\
=\begin{cases}\displaystyle \qbin{\An}{a}
\prod_{i=1}^n \frac{1-q^{a_i}}{1-q^{w(\A_i)}} 
& \text{if $S=R(w)$ for some $w\in\Symm_n$,} \\[4mm]
0 & \text{otherwise},
\end{cases}
\end{multline*}
where $\bar{S}=\binom{[n]}{2}\setminus S$.
If we define the tournament $T$ as $T=\bar{S}\cup \{(j,i):~(i,j)\in S\}$
then the left-hand side can be written as
\[
\CT\bigg[\prod_{(i,j)\in T} (x_i/x_j)_{a_i}(qx_j/x_i)_{a_j-1}\bigg].
\]
To summarise, Theorem~\ref{Thm_Poincare-q-Dyson} is equivalent to 
the following statement about tournaments.
\begin{theorem}
Let $T$ be a tournament and $a=(a_1,\dots,a_n)$ a sequence
of positive integers. Then
\begin{multline*}
\CT\bigg[\prod_{(i,j)\in T} (x_i/x_j)_{a_i}(qx_j/x_i)_{a_j-1} \bigg] \\
=\begin{cases}\displaystyle \qbin{\An}{a}
\prod_{i=1}^n \frac{1-q^{a_i}}{1-q^{w_{\textup{win}}(\A_i)}} & 
\text{if $T$ is transitive}, \\[4mm]
0 & \text{if $T$ is nontransitive}.
\end{cases}
\end{multline*}
\end{theorem}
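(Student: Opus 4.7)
The plan is to deduce the tournament identity directly from Theorem~\ref{Thm_Poincare-q-Dyson} by extracting the coefficient of a generic monomial $t_S=\prod_{(i,j)\in S}t_{ij}$ on both sides of \eqref{Eq_Poincare-q-Dyson}. On the left, I expand $\prod_{i<j}(1-t_{ij}x_j/x_i)=\sum_{S\subseteq\binom{[n]}{2}}(-1)^{\abs{S}}t_S\prod_{(i,j)\in S}(x_j/x_i)$, so the coefficient of $t_S$ is the constant term of
\[
(-1)^{\abs{S}}\prod_{(i,j)\in S}(x_j/x_i)\,\prod_{1\le i<j\le n}(x_i/x_j)_{a_i}(qx_j/x_i)_{a_j-1}.
\]
On the right, the coefficient of $t_S$ in $W_a(\bt)\prod_i\qbin{\sigma_i-1}{a_i-1}$ is, by the definition of $W_a(\bt)$, zero unless $S=R(w)$ for some (necessarily unique) $w\in\Symm_n$, in which case it equals $\qbin{\An}{a}\prod_i(1-q^{a_i})/(1-q^{w(\sigma_i)})$ after using \eqref{Eq_qmultinomial}.

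Next I absorb the factors $x_j/x_i$ with $(i,j)\in S$ into the $q$-shifted factorials using the elementary identity
\[
\frac{x_j}{x_i}\,(x_i/x_j)_{a_i}(qx_j/x_i)_{a_j-1}=-(x_j/x_i)_{a_j}(qx_i/x_j)_{a_i-1},
\]
which swaps the roles of $i$ and $j$ at the cost of one minus sign per element of $S$. The total sign is $(-1)^{\abs{S}}\cdot(-1)^{\abs{S}}=+1$, so the constant term on the left becomes
\[
\CT\bigg[\prod_{(i,j)\in S}(x_j/x_i)_{a_j}(qx_i/x_j)_{a_i-1}\prod_{(i,j)\in\bar S}(x_i/x_j)_{a_i}(qx_j/x_i)_{a_j-1}\bigg].
\]

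Finally I recognise this as the tournament product by associating to $S$ the tournament $T=\bar S\cup\{(j,i):(i,j)\in S\}$, i.e.\ reversing every edge in $S$ relative to the base orientation $1\to 2\to\cdots\to n$. Under this bijection, the statement $S=R(w)$ is exactly the statement that $T$ is transitive with winner permutation $w_{\textup{win}}=w$: indeed, $R(w)$ was defined as the set of pairs $(i,j)\in\binom{[n]}{2}$ reversed by $w^{-1}$, and reversing those edges in $1\to 2\to\cdots\to n$ produces precisely the transitive tournament whose ranking is $w$. I expect no real obstacle here: the argument is essentially a change of notation together with the algebraic swap identity, and the only thing to verify carefully is the clean correspondence $S\leftrightarrow T$ and the fact that $\prod_i(1-q^{a_i})/(1-q^{w(\sigma_i)})$ in the permutation picture matches $\prod_i(1-q^{a_i})/(1-q^{w_{\textup{win}}(\sigma_i)})$ in the tournament picture, which is immediate from $w=w_{\textup{win}}$.
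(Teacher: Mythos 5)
Your proposal is correct and takes essentially the same route as the paper: expanding $\prod_{i<j}(1-t_{ij}x_j/x_i)$, equating coefficients of $t_S$, applying the same swap identity to absorb $x_j/x_i$ into the $q$-shifted factorials, and passing to tournaments via $T=\bar S\cup\{(j,i):(i,j)\in S\}$. The algebraic check that $\prod_i\qbin{\sigma_i-1}{a_i-1}\,\frac{1-q^{\sigma_i}}{1-q^{w(\sigma_i)}}=\qbin{\An}{a}\,\frac{1-q^{a_i}}{1-q^{w(\sigma_i)}}$ and the identification of $S=R(w)$ with transitivity are both handled correctly.
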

The `nonzero part' of this theorem is \cite[Theorem 2.2]{BG85}
and the `zero part' is \cite[Theorem 1.3]{BG85}, which is Bressoud and
Goulden's `Master Theorem'.

\section{Kadell's conjecture}\label{Sec_Kadell}

In this section we prove Theorem~\ref{Thm_Kadell-conjecture} 
and derive a number of generalisations involving Schur functions.

\subsection{A scalar product}
To make our proof of \eqref{Eq_Kadell-nul} and
 \eqref{Eq_Kadell-niet-nul} reasonably self-contained, we begin by 
reviewing some basic facts about two families of polynomials known as
type $\mathrm{A}$ Demazure characters or key polynomials, see e.g., 
\cite{Demazure72,FL09,Ion03,LS90}.

Throughout this section we assume that $x=(x_1,\dots,x_n)$.
For $1\leq i\leq n-1$ let 
$\pi_i$ and $\hat{\pi}_i=\pi_i-\id$ be the isobaric divided 
difference operators
\[
(\pi_i f)(x)=
\frac{x_if(x)-x_{i+1}f(\dots,x_{i+1},x_i,\dots)}{x_i-x_{i+1}}.
\]
and
\[
(\hat{\pi}_i f)(x)=
\frac{x_{i+1}f(x)-x_{i+1}f(\dots,x_{i+1},x_i,\dots)}{x_i-x_{i+1}}.
\]
Recall that $\Comp_n$ denotes the set of compositions of the form
$v=(v_1,\dots,v_n)$.
If $v$ is a partition, i.e., $v=v^{+}$, we say that $v$ is dominant.
We denote by $\bar{v}$ the composition $\bar{v}=(v_n,\dots,v_1)$. 
If $\bar{v}=v^{+}$ we say that $v$ is antidominant.
For example, if $v=(1,0,4,1,0,3)$ then $v^{+}=(4,3,1,1)$ and 
$\bar{v}=(3,0,1,4,0,1)$.

Let $s_i v=(\dots,v_{i+1},v_i,\dots)$. 
Then the key polynomials $K_v(x)$ and $\hat{K}_v(x)$ for $v\in\Comp_n$ are
defined by the recursion
\[
K_{s_iv}=\pi_i K_v,\qquad
\hat{K}_{s_iv}=\hat{\pi}_i \hat{K}_v,
\]
subject to the initial conditions
\[
K_v(x)=\hat{K}_v(x)=x^v \quad \text{if $v$ is dominant}.
\]
This definition is consistent since both types of isobaric divided
difference operators satisfy the braid relations. (In fact, both
$\{-\pi_i\}_{1\leq i<n}$ and $\{\hat{\pi}_i\}_{1\leq i<n}$ 
generate the $0$-Hecke algebra of the symmetric group.)
For $v$ an antidominant composition $K_v$ corresponds to a Schur 
function. 
Specifically $K_{\bar{\la}}=s_{\la}$ for $\la\in\Part_n$.

The significance of the key polynomials to constant term identities 
rests with the fact that they form adjoint bases of the ring 
$\Z[x_1,\dots,x_n]$ with respect to the scalar product
\begin{equation}\label{Eq_Scalar-product}
\ip{f}{g}=\CT\Big[ f(x_1,\dots,x_n)g(x_n^{-1},\dots,x_1^{-1})
\prod_{1\leq i<j\leq n}(1-x_i/x_j) \Big].
\end{equation}
More precisely,
\[
\ip{K_v}{\hat{K}_w}=\delta_{v,\bar{w}}.
\]
This in particular implies that
\[
\ip{s_{\la}}{x^{\mu}}=\delta_{\la\mu}.
\]
We later require the generalisation of this formula to the case
of nondominant monomials:
\begin{equation}\label{Eq_Schur-Monomial}
\ip{s_{\la}}{x^v}=
\begin{cases}
(-1)^{l(w)} & \text{if $v+\delta=w(\la+\delta)$ for some $w\in\Symm_n$} \\[2mm]
0 & \text{otherwise},
\end{cases}
\end{equation}
where $\delta:=(n-1,\dots,1,0)$.

\subsection{Towards Kadell}\label{Sec_Towards-Kadell}
Combining simple properties of the scalar product
\eqref{Eq_Scalar-product} with Theorem~\ref{Thm_Poincare-q-Dyson}
we derive a number of Kadell-like constant term identities,
given in Propositions~\ref{Prop_CT-kappa}--\ref{Prop_CT-vnu} below.
Kadell's conjecture follows from these results in a straightforward
manner.

For $x=(x_1,\dots,x_n)$, $a=(a_1,\dots,a_n)$ with all $a_i>0$ and
\begin{equation}\label{Eq_tset}
\bt=\{t_{ij}\}_{1\leq i<j\leq n}
\end{equation}
we define 
\[
D(a;x;\bt)=\prod_{1\leq i<j\leq n}
(x_i/x_j)_{a_i}(qx_j/x_i)_{a_j-1} (1-t_{ij}x_j/x_i),
\]
and 
\[
D_{v,\la}(a;\bt)=\CT\big[x^{-v} s_{\la}\big(x^{(a)}\big) 
D(a;x;\bt)\big],
\]
for $v\in \Comp_n$ and $\la\in\Part_{\An}$.
As before, $x^{(a)}$ is given by \eqref{Eq_xa}.
Note that $D_{v,\la}(a)$ defined in \eqref{Eq_D_vla} corresponds to
$D_{v,\la}(a;\bt)$ for $t_{ij}=q^{a_j}$.
If $\la=v^{+}$, as will often be the case, we simply write $D_v(a;\bt)$:
\[
D_v(a;\bt)=D_{v,v^{+}}(a;\bt)=
\CT\big[x^{-v} s_{v^{+}}\big(x^{(a)}\big) D(a;x;\bt)\big].
\]
With this notation \eqref{Eq_Poincare-q-Dyson} becomes
\[
D_0(a;\bt)
=W_a(\bt)\prod_{i=1}^n \qbin{\sigma_i-1}{a_i-1} ,
\]
where $0$ denotes the composition $(0,\dots,0)$.
Since $D(a;x;\bt)$ is homogeneous of degree zero, 
$D_{v,\la}(a;\bt)=0$ if $\abs{\la}\neq\abs{v}$.

For $w\in\Symm_n$ and $b$ an arbitrary sequence of length $n$
let $w(b)=(b_{w(1)},\dots,b_{w(n)})$. Also define 
$w(\bt)=\{t_{w(i),w(j)}\}_{1\leq i<j\leq n}\big)$ with
$t_{ij}:=1/t_{ji}$ for $i>j$.
For example, if $n=3$ and $w=(2,3,1)$ then
\begin{multline*}
D(w(a);w(x);w(\bt))=(1-t_{23}x_2/x_3)(1-t_{12}^{-1}x_2/x_1)
(1-t_{13}^{-1}x_3/x_1)\\
\times
(x_2/x_3,x_2/x_1)_{a_2}(qx_3/x_2)_{a_3-1} (x_3/x_1)_{a_3}
(qx_1/x_2,qx_1/x_3)_{a_1-1}.
\end{multline*}

\begin{lemma}\label{Lem_sym}
For $w\in\Symm_n$
\[
D_{v,\la}(a;\bt)=t_{R(w)} D_{w(v),\la}\big(w(a);w(\bt)\big),
\]
or, when $\la=v^{+}$,
\[
D_v(a;\bt)=t_{R(w)} D_{w(v)}\big(w(a);w(\bt)\big).
\]
\end{lemma}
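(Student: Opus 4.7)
My plan is to exploit the fact that the constant term in the variables $x_1,\dots,x_n$ is invariant under any permutation of these variables, and then make the substitution $x_i\mapsto x_{w^{-1}(i)}$ in the integrand of $D_{v,\la}(a;\bt)$. The lemma will follow by tracking how each of the three factors $x^{-v}$, $s_{\la}(x^{(a)})$ and $D(a;x;\bt)$ transforms.

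The first two factors are essentially immediate. For the monomial, $x^{-v}=\prod_i x_i^{-v_i}$ under $x_i\mapsto x_{w^{-1}(i)}$ becomes $\prod_j x_j^{-v_{w(j)}}=x^{-w(v)}$ after reindexing $j=w^{-1}(i)$. For the Schur factor, the substituted alphabet $x^{(a)}$ is a block-permutation of $x^{(w(a))}$ (block $i$ of length $a_i$ built on $x_{w^{-1}(i)}$ is, after reindexing by $j=w^{-1}(i)$, block $j$ of length $a_{w(j)}$ built on $x_j$), and since $s_\la$ is symmetric we get $s_\la(x^{(w(a))})$.

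The content of the lemma lies in the transformation of $D(a;x;\bt)$. After substitution, reindexing the product $\prod_{i<j}$ by $(p,r):=(w^{-1}(i),w^{-1}(j))$ yields a product over ordered pairs $(p,r)$ with $p\ne r$ and $w(p)<w(r)$. For $(p,r)$ with $p<r$ the resulting factor $(x_p/x_r)_{a_{w(p)}}(qx_r/x_p)_{a_{w(r)}-1}(1-t_{w(p),w(r)}x_r/x_p)$ matches the corresponding factor of $D(w(a);x;w(\bt))$ on the nose. The interesting contribution comes from pairs with $p>r$ (equivalently, pairs $(r,p)\in I(w)$): here the substituted factor must be compared with the $D(w(a);x;w(\bt))$-factor attached to the swapped pair $(r,p)$, which carries the alphabet $x_r,x_p$ and the parameter $w(\bt)_{rp}=t_{w(p),w(r)}^{-1}$. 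A short direct calculation with the identities $(u^{-1})_a/(qu^{-1})_{a-1}=1-u^{-1}$, $(qu)_{b-1}/(u)_b=1/(1-u)$ and $(1-tu)/(1-t^{-1}u^{-1})=-tu$ shows that the ratio of the substituted to the desired factor collapses to the single scalar $t_{w(p),w(r)}$.

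It remains to collect the product of these scalars. Using $R(w)=I(w^{-1})=\{(w(j),w(i))\in\binom{[n]}{2}:(i,j)\in I(w)\}$, the pairs $(p,r)$ contributing extra scalars (i.e.\ the pairs $(r,p)\in I(w)$) give
\[
\prod_{(i,j)\in I(w)} t_{w(j),w(i)}=\prod_{(k,l)\in R(w)} t_{kl}=t_{R(w)}.
\]
Hence the substitution turns the integrand into $t_{R(w)}\,x^{-w(v)}s_\la(x^{(w(a))})D(w(a);x;w(\bt))$, and taking constant terms yields the first formula of the lemma. The second formula is then immediate from $(w(v))^{+}=v^{+}$. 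The only delicate step is the verification of the scalar ratio and the identification of the contributing pairs with $R(w)$; everything else is bookkeeping.
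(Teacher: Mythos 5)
Your proof is correct and follows essentially the same approach as the paper: the authors compress your whole calculation into the one-line assertion that $D\big(w(a);w(x);w(\bt)\big)=D(a;x;\bt)/t_{R(w)}$ (together with the observations $w(x)^{-w(v)}=x^{-v}$ and the symmetry of $s_\la$), whereas you spell out the verification of this kernel identity, correctly computing the per-inversion scalar ratio $t_{w(p),w(r)}$ and collecting them into $t_{R(w)}$.
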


\begin{proof}
It is not hard to verify that
\[
D\big(w(a);w(x);w(\bt)\big)=\frac{D(a;x;\bt)}{t_{R(w)}}.
\]
Since $s_{\la}\big(w(x)^{w(a)}\big)=s_{\la}\big(x^{(a)}\big)$
(recall that $s_{\la}$ is a symmetric function), and
$w(x)^{-w(v)}=x^{-v}$ the result follows.
\end{proof}

We now introduce a second set of variables $y=(y_1,\dots,y_m)$ and write
$x,y$ for the concatenation of $x$ and $y$:
$x,y=(x_1,\dots,x_n,y_1,\dots,y_m)$.
We also define $\btau$ as
\begin{equation}\label{Eq_tauset}
\btau=\{t_{ij}\}_{1\leq i<j\leq m+n}|_{t_{ij}=0 \text{ for } i>n}.
\end{equation}
If we finally denote the sequence 
$(a_1,\dots,a_n,\underbrace{1,\dots,1}_{m \text{ times}})$
by $a1^m$, then an elementary calculation shows that
\[
D(a1^m;x,y;\btau)=D(a;x;\bt) 
\prod_{1\leq i<j\leq m}(1-y_i/y_j)
\prod_{i=1}^n
\prod_{j=1}^m(1-s_{ij}y_j/x_i)(x_i/y_j)_{a_i},
\]
where $s_{ij}:=t_{i,j+n}$.
Given a partition $\la$, let $\la'$ be its conjugate.
Using the dual Cauchy identity for Schur functions \cite{Macdonald95}
in the form
\[
\sum_{\la} (-1)^{\abs{\la}} s_{\la}(u_1,\dots,u_n) 
s_{\la'}(v_1,\dots,v_m)=
\prod_{i=1}^n \prod_{j=1}^m (1-u_iv_j)
\]
we can expand 
\begin{align*}
\prod_{i=1}^n \prod_{j=1}^m (x_i/y_j)_{a_i}
&=\prod_{i=1}^n \prod_{j=1}^m \prod_{k=0}^{a_i-1} \big(1-q^kx_i/y_j\big)\\
&=\prod_{i=1}^{\An} \prod_{j=1}^m \big(1-x_i^{(a)}/y_j\big) 
=\sum_{\la} (-1)^{\abs{\la}} s_{\la}\big(x^{(a)}\big) 
s_{\la'}\big(y^{-1}\big),
\end{align*}
where $y^{-1}:=(y_1^{-1},\dots,y_m^{-1})$.
Hence
\begin{multline*}
D(a1^m;x,y;\btau)=D(a;x;\bt)  \\ \times
\sum_{\la} (-1)^{\abs{\la}} 
s_{\la}\big(x^{(a)}\big) s_{\la'}\big(y^{-1}\big)
\prod_{1\leq i<j\leq m}(1-y_i/y_j)
\prod_{i=1}^n \prod_{j=1}^m(1-s_{ij}y_j/x_i).
\end{multline*}
Our next step is to take the constant term with respect to $y$, 
denoted by $\CT_y$.
Recalling the scalar product \eqref{Eq_Scalar-product}, this gives
\begin{multline*}
\CT_y\big[D(a1^m;x,y;\btau) \big]\\
=D(a;x;\bt) \sum_{\la} (-1)^{\abs{\la}} s_{\la}\big(x^{(a)}\big) 
\Big\langle\prod_{i=1}^n \prod_{j=1}^m
(1-s_{ij}y_j/x_i),s_{\la'}(y)\Big\rangle_{\! y},
\end{multline*}
where we have written $\ip{\cdot}{\cdot}_y$ to indicate that the 
scalar product is with respect to $y$ instead of the usual $x$-variables.

Let $\Mat_{n,m}$ be the set of $(0,1)$-matrices of size $n\times m$
and $\Comp_{n,m}=\{(v_1,\dots,v_n)\in\Z^n:~0\leq v_i\leq m 
\text{ for all $1\leq i\leq n$} \}$ the subset of the set of
compositions $\Comp_n$ that fit in a rectangle of size $n\times m$. 
For $\kappa\in\Mat_{n,m}$ denote by $r(\kappa)\in\Comp_{n,m}$ and 
$c(\kappa)\in\Comp_{m,n}$ the compositions encoding the
row and column sums of $\kappa$:
\[
r_i(\kappa)=\sum_{j=1}^m \kappa_{ij}
\quad\text{and}\quad
c_j(\kappa)=\sum_{i=1}^n \kappa_{ij}.
\]
We shall write $\abs{\kappa}=\sum_{ij}\kappa_{ij}=\abs{r(\kappa)}
=\abs{c(\kappa)}$
for the sum of the entries of $\kappa$.
For example, if
\[
\kappa=\begin{pmatrix} 
0 & 1 & 0 & 0 & 1 \\
1 & 1 & 0 & 0 & 0 \\
1 & 1 & 0 & 1 & 1 \\
0 & 0 & 0 & 1 & 0
\end{pmatrix}
\in\Mat_{4,5}
\]
then $r(\kappa)=(2,2,4,1)$, $c(\kappa)=(2,3,0,2,2)$ and
$\abs{\kappa}=9$.

Let 
\[
\bs:=\{s_{ij}\}_{1\leq i\leq n;~1\leq j\leq m}=
\{t_{i,j+n}\}_{1\leq i\leq n;~1\leq j\leq m},
\]
and for $f(\bs)$ a polynomial in $\bs$ denote by $[\bs^{\kappa}] f(\bs)$
the coefficient of the monomial 
$\bs^{\kappa}:=\prod_{i=1}^n\prod_{j=1}^m s_{ij}^{\kappa_{ij}}$ in $f$.
Since
\[
\prod_{i=1}^n \prod_{j=1}^m (1-s_{ij}y_j/x_i)
=\sum_{\kappa\in \Mat_{n,m}} (-1)^{\abs{\kappa}} 
y^{c(\kappa)}x^{-r(\kappa)} \bs^{\kappa},
\]
we have
\begin{multline*}
\big[\bs^{\kappa}\big] \CT_y\big[D(a1^m;x,y;\btau) \big]\\
=D(a;x;\bt) \sum_{\la} (-1)^{\abs{\la}+\abs{\kappa}} x^{-r(\kappa)}
s_{\la}\big(x^{(a)}\big) 
\big\langle y^{c(\kappa)},s_{\la'}(y)\big\rangle_{\! y}.
\end{multline*}
Recalling \eqref{Eq_Schur-Monomial}, the summand on the right vanishes
unless $\la$ is the (unique) partition such that its conjugate satisfies
$\la'=w\big(c(\kappa)+\delta_m\big)-\delta_m$, where 
$\delta_m:=(m-1,\dots,1,0)$ and $w\in\Symm_m$. 
This in particular implies that 
$\abs{\la}=\abs{c(\kappa)}=\abs{\kappa}$.
Assuming such $\la$ and $w$, and interchanging the left and right-hand 
sides, we obtain
\[
x^{-r(\kappa)} s_{\la}\big(x^{(a)}\big) D(a;x;\bt)
=(-1)^{l(w)} \big[\bs^{\kappa} \big]
\CT_y \big[D(a1^m;x,y;\btau)\big].
\]
As a final step we take the constant term with respect to $x$
to arrive at the following result.
\begin{proposition}\label{Prop_CT-kappa}
Let $a=(a_1,\dots,a_n)$ be a sequence of positive integers,  
$\delta_m=(m-1,\dots,1,0)$, and let $\bt$ and
$\btau$ be given by \eqref{Eq_tset} and \eqref{Eq_tauset} respectively.
For $\kappa$ a $(0,1)$-matrix in $\Mat_{n,m}$, $\la\in\Comp_{\An,m}$
a partition of $\abs{\kappa}$ and $w\in\Symm_m$ such that
\begin{equation}\label{Eq_law}
\la'=w\big(c(\kappa)+\delta_m\big)-\delta_m,
\end{equation}
we have
\[
D_{r(\kappa),\la}(a;\bt)
=(-1)^{l(w)} \big[\bs^{\kappa}\big] D_0(a1^m;\btau).
\]
\end{proposition}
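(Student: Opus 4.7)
The plan is to compute $[\bs^\kappa] D_0(a1^m;\btau)$ by performing the two constant terms in the right order---first in $y$, then in $x$---and recognising the intermediate $y$-constant term as a scalar product against a Schur function. All the building blocks (the factorisation of the $(x,y)$-kernel, the dual Cauchy identity, the scalar product \eqref{Eq_Scalar-product}, and the Demazure-type formula \eqref{Eq_Schur-Monomial}) have already been introduced, so the proof is essentially an assembly argument.

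First I would verify the factorisation
\[
D(a1^m;x,y;\btau) = D(a;x;\bt)\,\prod_{1\leq i<j\leq m}(1-y_i/y_j)\,\prod_{i=1}^n\prod_{j=1}^m(1-s_{ij}y_j/x_i)(x_i/y_j)_{a_i}.
\]
The key observation is that the vanishing of $t_{ij}$ for $i>n$ in $\btau$ kills all would-be $y_i$-$y_j$ cross-factors except the pure Weyl-denominator-like product $\prod_{i<j\leq m}(1-y_i/y_j)$. Then in $\lambda$-ring form the dual Cauchy identity gives
\[
\prod_{i=1}^n\prod_{j=1}^m(x_i/y_j)_{a_i}=\prod_{i=1}^{\An}\prod_{j=1}^m(1-x_i^{(a)}/y_j)=\sum_\lambda(-1)^{|\lambda|}s_\lambda\big(x^{(a)}\big)s_{\lambda'}(y^{-1}).
\]

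Next I would take $\CT_y$. Because $s_{\lambda'}$ is symmetric, the combination $s_{\lambda'}(y^{-1})\prod_{i<j\leq m}(1-y_i/y_j)$ is exactly the kernel of \eqref{Eq_Scalar-product} paired against $s_{\lambda'}(y)$, turning $\CT_y$ into a scalar product in the $y$-variables. Expanding
\[
\prod_{i,j}(1-s_{ij}y_j/x_i)=\sum_{\kappa\in\Mat_{n,m}}(-1)^{|\kappa|}\,y^{c(\kappa)}x^{-r(\kappa)}\,\bs^\kappa,
\]
and extracting the coefficient of $\bs^\kappa$, each summand of the $\lambda$-sum contributes
\[
(-1)^{|\lambda|+|\kappa|}\,s_\lambda\big(x^{(a)}\big)\,x^{-r(\kappa)}\,\langle y^{c(\kappa)},s_{\lambda'}(y)\rangle_y.
\]
By \eqref{Eq_Schur-Monomial} the scalar product forces $\lambda'+\delta_m$ and $c(\kappa)+\delta_m$ to lie in the same $\Symm_m$-orbit, collapsing the sum to the unique $\lambda$ satisfying \eqref{Eq_law} and contributing the sign $(-1)^{l(w)}$. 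Since $|\lambda|=|c(\kappa)|=|\kappa|$, the two remaining signs cancel.

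Taking $\CT_x$ of the resulting identity then yields $[\bs^\kappa]D_0(a1^m;\btau)=(-1)^{l(w)}D_{r(\kappa),\lambda}(a;\bt)$, which after multiplying both sides by $(-1)^{l(w)}$ is exactly the statement. The main delicacy is the sign bookkeeping in the penultimate step: one must correctly combine the $(-1)^{|\lambda|}$ produced by dual Cauchy, the $(-1)^{|\kappa|}$ from the $(0,1)$-matrix expansion, and the $(-1)^{l(w)}$ from \eqref{Eq_Schur-Monomial}, and verify that only the Weyl length survives in the end. Beyond that, the proof is a routine chaining of identities already established.
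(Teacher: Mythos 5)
Your proposal is correct and follows essentially the same route as the paper: factor the $(x,y)$-kernel using $t_{ij}=0$ for $i>n$, expand via the dual Cauchy identity, take $\CT_y$ as the scalar product \eqref{Eq_Scalar-product} against $s_{\lambda'}(y)$, extract the $\bs^\kappa$-coefficient using the $(0,1)$-matrix expansion, invoke \eqref{Eq_Schur-Monomial}, and finally take $\CT_x$. The sign bookkeeping you flag is handled exactly as you describe, with $(-1)^{|\lambda|+|\kappa|}=1$ since $|\lambda|=|c(\kappa)|=|\kappa|$, leaving only $(-1)^{l(w)}$.
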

We should remark that one cannot choose $\kappa\in\Mat_{n,m}$
such that all conceivable row and column sums $r(\kappa)\in\Comp_{n,m}$ 
and $c(\kappa)\in\Comp_{m,n}$ arise.
Apart from the obvious restriction 
$\abs{r(\kappa)}=\abs{c(\kappa)}$, we are bound by the Gale--Ryser
theorem \cite{BR91}. 
This theorem for example says that a pair of partitions $\mu$ and $\nu$
can arise as the row- and column-sums of a $(0,1)$-matrix if and only if
$\mu$ is dominated by $\nu'$.
The smallest example of a constant term that does not occur in the above
is $n=2$, $\la=(1,1)$ and $r(\kappa)=(2,0)$ or $(0,2)$. Whatever choice of 
$m\geq 2$ we make, $\max\{\la'+\delta_m\}=m+1$ whereas 
$\max\{c(\kappa)+\delta_m\}\leq m$. Hence there is no $w\in\Symm_m$
such that \eqref{Eq_law} holds.
Despite these caveats, the above proposition is very useful since for 
each admissible pair $\la,r(\kappa)$ the constant term on the right-hand
side has already been evaluated by Theorem~\ref{Thm_Poincare-q-Dyson}
with $n\mapsto m+n$.
In particular,
\[
D_0(a1^m;\btau)=
\sum_{w\in \Symm_{m+n}} c_w(a) t_{R(w)}\big|_{t_{ij}=0 \text{ for } i>n}.
\]
The fact that $t_{ij}=0$ for $i>n$ means that only the
$(m+n)!/m!$ permutations of $(1,\dots,n+m)$ contribute for
which each pair of integers in $n+1,\dots,n+m$ occurs 
in natural order.
But this in turn means that if $t_{i,j+n}=s_{ij}$ is in $t_{R(w)}$ then so
must be $s_{i1},s_{i2},\dots,s_{i,j-1}$; if $i<n$ overtakes $j>n$ but 
$n+1,\dots,n+m$ are in natural order then $i$ must also have overtaken
$n+1,\dots,j-1$.
In terms of the $(0,1)$-matrices this means that each row must be a
sequence of ones (possibly of zero length) followed by a sequence of
zeros (possibly of zero length), otherwise the coefficient of 
$\bs^{\kappa}$ is necessarily zero.
We summarise this in our next proposition.
\begin{proposition}\label{Prop_CT-zero}
Assume the conditions of Proposition~\ref{Prop_CT-kappa}.
Then
\[
D_{r(\kappa),\la}(a;\bt)=0
\]
if $\kappa$ is not a $(0,1)$-matrix such that in each row all ones precede 
all zeros.
\end{proposition}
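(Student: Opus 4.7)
The plan is to combine Proposition~\ref{Prop_CT-kappa} with the explicit description of $D_0(a1^m;\btau)$ coming from Theorem~\ref{Thm_Poincare-q-Dyson}, so that the vanishing is reduced to a purely combinatorial statement about which monomials $t_{R(w)}$ can possibly appear once we impose $t_{ij}=0$ for $i>n$.

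First I would write, via Theorem~\ref{Thm_Poincare-q-Dyson} applied with $n\mapsto m+n$ and the sequence $a1^m$, the expansion
\[
D_0(a1^m;\btau)=\sum_{w\in\Symm_{m+n}}\coef_w(a)\,t_{R(w)}\Big|_{t_{ij}=0\text{ for }i>n},
\]
for certain coefficients $\coef_w(a)\in\Rat(q)$. By Proposition~\ref{Prop_CT-kappa},
\[
D_{r(\kappa),\la}(a;\bt)=(-1)^{l(w)}\bigl[\bs^{\kappa}\bigr]D_0(a1^m;\btau),
\]
so it suffices to show that no surviving $t_{R(w)}$ contributes a monomial $\bs^{\kappa}$ unless every row of $\kappa$ consists of ones followed by zeros.

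Next I would identify which $w\in\Symm_{m+n}$ survive the substitution $t_{ij}=0$ for $i>n$. Since $R(w)=\{(a,b):a<b,\;w^{-1}(a)>w^{-1}(b)\}$, the presence of any pair $(i,j)\in R(w)$ with $n<i<j$ kills the whole term. Hence the survivors are precisely the permutations $w$ in which the values $n{+}1,n{+}2,\dots,n{+}m$ appear in their natural (increasing) order; there are $(m+n)!/m!$ such $w$.

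For such a surviving $w$, set $p_j=w^{-1}(j+n)$ for $1\le j\le m$, so that $p_1<p_2<\cdots<p_m$. Given $1\le i\le n$ and $1\le j\le m$, the variable $s_{ij}=t_{i,j+n}$ enters $t_{R(w)}$ precisely when $(i,j+n)\in R(w)$, i.e., when $w^{-1}(i)>p_j$. Because the $p_j$ are strictly increasing, the set $J_i:=\{j:w^{-1}(i)>p_j\}$ is automatically an initial segment $\{1,2,\dots,\ell_i\}$ of $\{1,\dots,m\}$. Thus the $(0,1)$-matrix $\kappa$ with $\kappa_{ij}=1$ iff $s_{ij}$ appears in $t_{R(w)}$ has the required ``ones-before-zeros'' shape in every row. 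Any $\kappa$ violating this shape therefore cannot arise from any surviving $w$, so $[\bs^{\kappa}]D_0(a1^m;\btau)=0$ and Proposition~\ref{Prop_CT-zero} follows.

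There is no real obstacle here: the entire argument is already essentially given in the paragraph preceding the proposition statement, and the proof I envisage would just formalise the three ingredients above (the expansion from Theorem~\ref{Thm_Poincare-q-Dyson}, the characterisation of surviving $w$, and the initial-segment structure of $J_i$). The only mild care needed is to keep straight the relabelling $s_{ij}=t_{i,j+n}$ and the fact that each $t_{ab}$ appears with multiplicity at most one in $t_{R(w)}$, which is what forces $\kappa$ to be a $(0,1)$-matrix in the first place.
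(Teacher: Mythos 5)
Your argument is correct and is essentially the paper's own proof, just phrased more formally: the paper likewise expands $D_0(a1^m;\btau)$ via Theorem~\ref{Thm_Poincare-q-Dyson}, observes that $t_{ij}=0$ for $i>n$ restricts to the $(m+n)!/m!$ permutations keeping $n+1,\dots,n+m$ in natural order, and then concludes the ``ones-before-zeros'' row structure of $\kappa$ exactly as you do with the initial-segment observation for $J_i$. The only cosmetic difference is that the paper states the initial-segment property verbally rather than via the increasing sequence $p_1<\cdots<p_m$.
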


A $(0,1)$-matrix $\kappa$ such that the ones in each row precede
the zeros is uniquely determined by its row sums,
and in particular 
$c_j(\kappa)=\abs{\{r_i(\kappa):~i\geq j\}}$
or, more succinctly, $c(\kappa)=(r^{+}(\kappa))'=:\nu'$.
For example, for
\[
\kappa=\begin{pmatrix} 
1 & 1 & 0 & 0 & 0 \\
0 & 0 & 0 & 0 & 0 \\
1 & 1 & 1 & 0 & 0 \\
1 & 1 & 0 & 0 & 0
\end{pmatrix}
\in\Mat_{4,5},
\]
$r(\kappa)=(2,0,3,2)$, $r^{+}(\kappa)=(3,2,2,0)$ and
$c(\kappa)=(3,3,1,0,0)=(r^{+}(\kappa))'$.
If $c(\kappa)=\nu'$ then \eqref{Eq_law} is solved by 
$(\la,w)=(\nu,\id)$, leading to our next result.
\begin{proposition}\label{Prop_CT-vnu}
Let $a=(a_1,\dots,a_n)$ be a sequence of positive integers 
and let $\bt$ and $\btau$ be given by 
\eqref{Eq_tset} and \eqref{Eq_tauset} respectively. 
Then, for $v\in\Comp_{n,m}$,
\begin{equation}\label{Eq_CTSchur-2}
D_v(a;\bt)=
D_{v,v^{+}}(a;\bt)=\bigg[\prod_{i=1}^n\prod_{j=1}^{v_i} s_{ij}\bigg]
D_0(a1^m;\btau).
\end{equation}
\end{proposition}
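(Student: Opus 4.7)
The statement follows directly from Proposition~\ref{Prop_CT-kappa} once one makes the right choice of auxiliary $(0,1)$-matrix. My plan is as follows.

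Given $v \in \Comp_{n,m}$, I would take $\kappa \in \Mat_{n,m}$ to be the unique matrix whose $i$th row is $v_i$ ones followed by $m-v_i$ zeros, so that
\[
\bs^{\kappa} = \prod_{i=1}^{n}\prod_{j=1}^{v_i} s_{ij}
\]
is exactly the monomial appearing on the right-hand side of \eqref{Eq_CTSchur-2}.

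Next I would record the elementary identities $r(\kappa)=v$ and $r^{+}(\kappa)=v^{+}$, together with the computation
\[
c_j(\kappa)=\big|\{i:~\kappa_{ij}=1\}\big|=\big|\{i:~v_i\geq j\}\big|=(v^{+})'_j,
\]
so that $c(\kappa)=(v^{+})'$. This is the one small combinatorial observation needed, and it is essentially the definition of the conjugate partition.

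I would then set $\la=v^{+}$ and check the compatibility condition \eqref{Eq_law} of Proposition~\ref{Prop_CT-kappa}. It reads
\[
\la'=(v^{+})'=c(\kappa)=\id\bigl(c(\kappa)+\delta_m\bigr)-\delta_m,
\]
which is solved by $w=\id$ (and this $w$ is the unique such element, since $(v^{+})'+\delta_m$ is strictly decreasing). Proposition~\ref{Prop_CT-kappa} then yields
\[
D_v(a;\bt)=D_{v,v^{+}}(a;\bt)=(-1)^{l(\id)}\bigl[\bs^{\kappa}\bigr]D_0(a1^m;\btau)=\Big[\prod_{i=1}^{n}\prod_{j=1}^{v_i}s_{ij}\Big]D_0(a1^m;\btau),
\]
which is the desired identity. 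There is no real obstacle here: once one spots the correct $\kappa$, the verification is mechanical, as all the analytic work has already been carried out in Propositions~\ref{Prop_CT-kappa} and (implicitly, for the vanishing of other monomials) \ref{Prop_CT-zero}.
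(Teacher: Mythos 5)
Your proof is correct and is essentially identical to the paper's: the paper also takes $\kappa$ to be the unique $(0,1)$-matrix with row sums $v$ whose ones precede its zeros in each row, observes $c(\kappa)=(r^{+}(\kappa))'=(v^{+})'$, and applies Proposition~\ref{Prop_CT-kappa} with $(\la,w)=(v^{+},\id)$. Your remark that $w=\id$ is forced (since $(v^{+})'+\delta_m$ is strictly decreasing) is a small extra observation not needed for the conclusion, but it is accurate.
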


\subsection{Proof of Kadell's conjecture}

Because our results of Section~\ref{Sec_Towards-Kadell} assume that all 
integers of the sequence $a=(a_1,\dots,a_n)$ are positive (it is in 
fact easy to show that all results are true provided at most one of the 
$a_i$ is zero), we need to separately treat the case when some of the
$a_i$ are zero.

\begin{proof}[Proof of \eqref{Eq_Kadell-nul} for positive $a_i$]
Let $v\in\Comp_{n,m}$ be a composition of $m$ such that $\max\{v_i\}<m$.
Now choose $\kappa$ to be a $(0,1)$-matrix in $\Mat_{n,m}$ 
such that 
\begin{equation}\label{Eq_rcnul}
r(\kappa)=(v_1,\dots,v_n)=v \quad\text{and}\quad c(\kappa)=(1^m).
\end{equation}
In other words, each column of $\kappa$ contains a single one and the
row sums of $\kappa$ (none of which has more than $m-1$ ones) form the
composition $v$.
Obviously, there must be a row of $\kappa$ which contains a $0$ followed
by a $1$, since not all ones occur in the same row and no column has
more than a single one.
We also note that the Gale--Ryser theorem does not cause an obstruction
since there are exactly $m!/(v_1!\cdots v_n!)>0$ $(0,1)$-matrices such 
that \eqref{Eq_rcnul} holds.
Finally, Since $c(\kappa)=(1^m)$, \eqref{Eq_law} is solved by 
$(\la,w)=((m),\id)$.
According to Proposition~\ref{Prop_CT-zero} we thus have
\[
D_{v,(m)}(a;\bt)=0.
\]
Equation \eqref{Eq_Kadell-nul} corresponds to the special case 
$t_{ij}=q^{a_j}$.
\end{proof}

\begin{proof}[Proof of \eqref{Eq_Kadell-niet-nul} for positive $a_i$]

Define
\[
W_a^{(k)}(\bt)=
\sum_{\substack{w\in\Symm_n \\ w(n)=k}} t_{R(w)}
\prod_{i=1}^n\frac{1-q^{\A_i}}{1-q^{w(\A_i)}}.
\]
Instead of \eqref{Eq_Kadell-niet-nul} we will derive the more general identity
\begin{equation}\label{Eq_Kadell_t}
D_v(a;\bt)=D_{v,(m)}(a;\bt)=
W_a^{(k)}(\bt)\,
\frac{(q^{\abs{a}})_m}{(q^{\abs{a}-a_k+1})_m}
\prod_{i=1}^n\qbin{\sigma_i-1}{a_i-1} 
\end{equation}
for $v=(0^{k-1},m,0^{n-k})$.
From the following refinement of \eqref{Eq_usum}
\begin{equation}\label{Eq_usum-k}
\sum_{\substack{w\in\Symm_n \\ w(n)=k}} w\bigg(\,\prod_{i=1}^n 
\frac{1-u_i}{1-u_1\cdots u_i}\bigg) \prod_{(i,j)\in R(w)} u_j
=\frac{u_{k+1}\cdots u_n(1-u_k)}{1-u_1\cdots u_n},
\end{equation}
if follows that
\[
W_a^{(k)}(\bt)\big|_{t_{ij}=q^{a_j}}=q^{\sigma_n-\sigma_k}
\frac{1-q^{a_k}}{1-q^{\abs{a}}}
\prod_{i=1}^n \frac{1-q^{\sigma_i}}{1-q^{a_i}}
\]
resulting in \eqref{Eq_Kadell-niet-nul}.

It remains to show \eqref{Eq_Kadell_t} and \eqref{Eq_usum-k}.
Starting with the latter, we will simultaneously prove \eqref{Eq_usum}
(with $m\mapsto n$) and \eqref{Eq_usum-k}. \label{pagelabel}
If we denote the sum sides of these identities by $g(u)$ and $g_k(u)$,
then
\[
g(u)=\sum_{k=1}^n g_k(u)\quad\text{and, for $k\geq 1$,}\quad 
g_k(u)=\frac{u_{k+1}\cdots u_n(1-u_k)}{1-u_1\cdots u_n} \, g(u^{(k)}),
\]
where $u^{(k)}=(u_1,\dots,u_{k-1},u_{k+1},\dots,u_n)$.
These two equations imply a recurrence for $g(u)$. 
Thanks to the initial condition $g(\text{--})=1$ and some telescoping,
this is trivially solved by $g(u)=1$, establishing both
\eqref{Eq_usum} and \eqref{Eq_usum-k}.

To prove \eqref{Eq_Kadell_t} we let $v=(0,\dots,0,m)$ in \eqref{Eq_CTSchur-2}.
Then
\[
D_{(0,\dots,0,m)}(a,\bt)=\big[s_{n1}\dots s_{nm} \big] D_0(a1^m;\btau).
\]
We now essentially repeat the reasoning that led to 
Proposition~\ref{Prop_CT-zero}.
The fact that $t_{ij}=0$ for $i>n$ means that only the $(m+n)!/m!$
permutations of $(1,\dots,n+m)$ contribute to 
$D_0(a1^m;\btau)$ for which each pair of integers in
$n+1,\dots,n+m$ occurs in natural order.
Since we further need to take the coefficient of $s_{n1}\dots s_{nm}$,
only those permutations $w$ contribute for which $t_{R(w)}$ contains the
subword $t_{n,n+1}\dots t_{n,m}$ but none of the letters $t_{i,j+n}$ for
$i<n$ and $j\geq 1$.
These are exactly the $(n-1)!$ permutations of the form
$w=(\pi,n+1,\dots,m+n,n)$, where $\pi\in\Symm_{n-1}$.
For such a permutation, and $a_{n+1}=\cdots=a_{n+m}=1$,
\[
\prod_{i=1}^{m+n} \frac{1-q^{\A_i}}{1-q^{w(\A_i)}}
\qbin{\A_i-1}{a_i-1}=
\frac{(q^{\abs{a}})_m}{(q^{\abs{a}-a_n+1})_m}
\prod_{i=1}^{n-1}\frac{1-q^{\A_i}}{1-q^{\pi(\A_i)}}
\prod_{i=1}^n\qbin{\A_i-1}{a_i-1}.
\]
Thanks to Theorem~\ref{Thm_Poincare-q-Dyson} we thus find
\begin{align*}
D_{(0,\dots,0,m)}(a;\bt)&=
\frac{(q^{\abs{a}})_m}{(q^{\abs{a}-a_n+1})_m}
\prod_{i=1}^n\qbin{\sigma_i-1}{a_i-1} \cdot
\sum_{\pi\in\Symm_{n-1}} t_{R(\pi)}
\prod_{i=1}^{n-1}\frac{1-q^{\A_i}}{1-q^{\pi(\A_i)}}\\
&=W_a^{(n)}(\bt)\,
\frac{(q^{\abs{a}})_m}{(q^{\abs{a}-a_n+1})_m}
\prod_{i=1}^n\qbin{\sigma_i-1}{a_i-1}.
\end{align*}
This settles the $k=n$ case of \eqref{Eq_Kadell_t}.

To deal with the remaining cases, let $s_k$ denotes the $k$th adjacent
(or simple) transposition. 
Then, by the $w=s_k$ case of Lemma~\ref{Lem_sym},
\[
D_{(0^{k-1},m,0^{n-k})}(a;\bt)=t_{k,k+1} 
D_{(0^{k-2},m,0^{n-k+1})}\big(s_k(a);s_k(\bt)\big)\quad 1\leq k\leq n-1.
\]
All we need to do is show that the claimed right-hand side of 
\eqref{Eq_Kadell_t} satisfies this same recursion.
To this end let $w\in\Symm_n$ such that $w(n)=k$ and let $W_a^{(k)}(\bt;w)$
denote the right-hand side but without the sum over $\Symm_n$:
\[
W_a^{(k)}(\bt;w)=
\frac{(q^{\abs{a}})_m}{(q^{\abs{a}-a_k+1})_m}
\prod_{i=1}^n\qbin{\sigma_i-1}{a_i-1} \cdot
t_{R(w)} \prod_{i=1}^n \frac{1-q^{\A_i}}{1-q^{w(\A_i)}}.
\]
It is a straightforward exercise to verify that
\[
W_a^{(k)}\big(\bt;w\big)=
t_{k,k+1} W_{s_k(a)}^{(k+1)}\big(s_k(\bt);\hat{w}\big),
\]
where $\hat{w}$ is the same permutation as $w$ except for the fact that
the numbers $k$ and $k+1$ have swapped position. 
(In particular $\hat{w}(n)=k+1$ as it should.)
Summing $w$ over $\Symm_n$ (such that $w(n)=k$) does the rest.
\end{proof}

\begin{proof}[Proof of Theorem~\ref{Thm_Kadell-conjecture} when not
all $a_i>0$]

We may assume that not all $a_i$ are zero since
\[
D_{v,\la}(0,\dots,0)=\delta_{v,0}\delta_{\la,0}.
\]

Given $a=(a_1,\dots,a_n)$ and $I\subseteq\{1,\dots,n\}$,
let $a^I$ denote the sequence obtained from
$a$ by deleting all $a_i$ for $i\in I$ and $a_I$ the
sequence of deleted $a_i$.
For example, if $n=7$ then $a^{\{3,5,6\}}=(a_1,a_2,a_4,a_7)$
and $a_{\{3,5,6\}}=(a_3,a_5,a_6)$.

Now let $I$ denote the index-set of those $a_i$ that are 
zero, so that the entries of $a^I$ are all strictly positive.
Recalling the third of our remarks made after \eqref{Eq_D_vla} 
we have
\begin{equation}\label{Eq_reduction}
D_{v,(m)}(a)=D_{v^I,(m)}(a^I)\prod_{i\in I} \delta_{v_i,0}.
\end{equation}

To prove that \eqref{Eq_Kadell-niet-nul} for $I\neq\emptyset$ is true 
there are two cases to consider.
First, if $k\in I$ (i.e., $a_k=0$) then the right-hand side vanishes due
to the factor $\delta_{v_k,0}$ in \eqref{Eq_reduction} and the fact
that $v_k=m>0$.
But the left side also vanishes due to the factor $(1-q^{a_k})$.
Next, if $k\not\in I$ then $v_i=0$ for all $i\in I$. 
Hence \eqref{Eq_reduction} simplifies to
\[
D_{v,(m)}(a)=D_{v^I,(m)}(a^I)=
\frac{q^{\A_n-\A_k} (1-q^{a_k})(q^{\abs{a}})_m}
{(1-q^{\abs{a}})(q^{\An-a_k+1})_m} \qbin{\abs{a}}{a},
\]
where the second equality follows from \eqref{Eq_Kadell-niet-nul}
for positive $a_i$.

Finally, to see that \eqref{Eq_Kadell-nul} for $I\neq\emptyset$ is true
we note that $v^{+}\neq (m)$ implies that $\big(v^I\big)^{+}\neq (m)$.
Hence, by \eqref{Eq_Kadell-nul} for positive $a_i$, \eqref{Eq_reduction}
yields
\[
D_{v,(m)}(a)=0. \qedhere
\]
\end{proof}

\subsection{Beyond Kadell}

One can consider more general applications of 
Propositions~\ref{Prop_CT-kappa}--\ref{Prop_CT-vnu} than Kadell's 
conjecture, and in this section we present the full details of one
further example.

In Proposition~\ref{Prop_CT-vnu} take $v=\bar{\la}$ with $\la\in\Part_n$
such that $\la_1=m$. 
Then
\[
D_{\bar{\la}}(a;\bt)
=\bigg[\prod_{i=1}^n\prod_{j=1}^{\la_{n-i+1}} s_{ij}\bigg]
D_0(a1^m;\btau).
\]
As before, we need to determine which permutations in $\Symm_{m+n}$
contribute to the right-hand side.
This can simply be read off from a diagrammatic representation of the
right as we will illustrate through an example. 

Let $m=3$, $n=4$ and $\bar{\la}=(0,1,3,3)$, and represent the set 
$\bs$ as well as the composition $\bar{\la}$ by a filling of an
$m\times n$ rectangle as follows:

\begin{center}
\begin{tikzpicture}[scale=0.6]
\filldraw[blue!20] (6,-4)--(3,-4)--(3,-1)--(4,-1)--(4,-2)--(6,-2)--cycle;
\draw[thick,blue!80] (3,0) rectangle (6,-4);
\draw (3.5,-0.5) node {$t_{15}$};
\draw (4.5,-0.5) node {$t_{16}$};
\draw (5.5,-0.5) node {$t_{17}$};
\draw (3.5,-1.5) node {$t_{25}$};
\draw (4.5,-1.5) node {$t_{26}$};
\draw (5.5,-1.5) node {$t_{27}$};
\draw (3.5,-2.5) node {$t_{35}$};
\draw (4.5,-2.5) node {$t_{36}$};
\draw (5.5,-2.5) node {$t_{37}$};
\draw (3.5,-3.5) node {$t_{45}$};
\draw (4.5,-3.5) node {$t_{46}$};
\draw (5.5,-3.5) node {$t_{47}$};
\end{tikzpicture}
\end{center}
Taking the coefficient of 
$\prod_{i=1}^n\prod_{j=1}^{\la_{n-i+1}} s_{ij}$
means that we need to take the coefficient of 
$$t_{15}^0t_{16}^0t_{17}^0 t_{25}^1t_{26}^0t_{27}^0
t_{35}^1t_{36}^1t_{37}^1 t_{45}^1t_{46}^1t_{47}^1=
t_{25}t_{35}t_{36}t_{37}t_{45}t_{46}t_{47}.
$$
Which permutations in $\Symm_{4+3}$ contribute can now be read off
from the diagram. The numbers $\blue{5},\blue{6},\blue{7}$ need to be
in natural order since $t_{56}=t_{57}=t_{67}=0$.
Because $t_{45},t_{46}$ and $t_{47}$ are covered by the diagram of
$\bar{\la}$ it follows that $4$ occurs after 
$\blue{5},\blue{6},\blue{7}$. 
Because $t_{35},t_{36}$ and $t_{37}$ are covered by $\bar{\la}$ it
follows that $3$ also occurs after $\blue{5},\blue{6},\blue{7}$.
Because $t_{25}$ is covered by $\bar{\la}$, but $t_{26}$ is not, the
number $2$ comes after $\blue{5}$ but before $\blue{6}$.
Because $t_{15}$ is not covered, the number $1$ comes before
$\blue{5},\blue{6},\blue{7}$.
As a result only two permutations contribute:
\[
(1,\blue{5},2,\blue{6},\blue{7},3,4) \quad\text{and}\quad 
(1,\blue{5},2,\blue{6},\blue{7},4,3).
\]

Having worked out this example in full detail it is clear that the
following somewhat simpler diagram than the above staircase encodes
exactly the same information:

\medskip
\begin{center}
\begin{tikzpicture}[scale=0.6]
\filldraw[blue!20] (3,0)--(0,0)--(0,3)--(1,3)--(1,2)--(3,2)--cycle;
\draw[thick,blue!80] (0,0) rectangle (3,4);
\draw (0.2,3.5) node {$1$};
\draw (1.2,2.5) node {$2$};
\draw (3.2,1.5) node {$3$};
\draw (3.2,0.5) node {$4$};
\draw (0.5,2.7) node {$\blue{5}$};
\draw (1.5,1.7) node {$\blue{6}$};
\draw (2.5,1.7) node {$\blue{7}$};
\end{tikzpicture}
\end{center}

\noindent
Here numbers occurring in the same column (such as $3,4$) may be 
permuted but numbers in the same row (such as $\blue{6,7}$) have
their relative ordering fixed. 

Given $\la=(\la_1,\dots,\la_n)\in\Comp_n$ such
that $\la_1=m$ let $\mult_i=\mult_i(\la)$ for $0\leq i\leq m$
be the multiplicity of $i$ in $\la$:
\[
\mult_i=\abs{\{\la_j: \la_j=i\}}.
\]
Then the number of numbers in the $i$th column ($0\leq i\leq m$)
of the above-type diagram is given by $\mult_i(\la)$,
so that the sum over $\Symm_{m+n}$ reduces to a sum over
$\Symm_{\mult_0}\times\cdots\times\Symm_{\mult_m}$.
In the case of our example, $\mult_0=1, \mult_1=1, \mult_2=0, \mult_3=2$,
resulting in a sum over $\Symm_2$ instead of $\Symm_7$.

We conclude by applying the above considerations to 
the case of strict partitions, i.e., $\la=(\la_1,\dots,\la_n)$
with $\la_i>\la_{i+1}$ for all $1\leq i\leq n-1$.
Then all multiplicities are $1$ so that there is only one 
remaining permutation:
\begin{multline*}
(\blue{n+1,\dots,n+\la_n},1,\blue{n+\la_n+1,\dots,n+\la_{n-1}},2,\dots \\
\dots,n-1,\blue{n+\la_2+1,\dots,n+\la_1},n).
\end{multline*}
Therefore
\[
D_{\bar{\la}}(a;\bt)
=\prod_{i=1}^n\qbin{\bar{\la}_i+\A_i-1}{a_i-1}.
\]
Applying Lemma~\ref{Lem_sym} with $v=\bar{\la}$ this yields
\[
D_{w^{-1}(\bar{\la})}(a;\bt)
=w\bigg(\,\prod_{i=1}^n\qbin{\bar{\la}_i+\A_i-1}{a_i-1}\bigg)
t_{R(w)},
\]
where
\[
w\bigg(\qbin{\bar{\la}_i+\A_i-1}{a_i-1}\bigg)=
\qbin{\bar{\la}_i+a_{w(1)}+\cdots+a_{w(i)}-1}{a_{w(i)}-1}.
\]
Finally taking $t_{ij}=q^{a_j}$ results in our final proposition.
\begin{proposition}\label{Prop_strict}
Let $\la\in\Part_n$ be a strict partition, i.e., 
$\la_1>\la_2>\cdots>\la_n\geq 0$, and set 
$\bar{\la}=(\la_n,\dots,\la_1)$.
Then, for $a_1,\dots,a_n$ positive integers and $w\in\Symm_n$,
\[
D_{w^{-1}(\bar{\la})}(a)=
w\bigg(\,\prod_{i=1}^n\qbin{\bar{\la}_i+\A_i-1}{a_i-1}\bigg)
q^{\sum_{(i,j)\in R(w)} a_j}.
\]
\end{proposition}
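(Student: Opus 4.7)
The plan is to derive Proposition~\ref{Prop_strict} as a direct specialisation of Proposition~\ref{Prop_CT-vnu}, exploiting the fact that a strict partition has all multiplicities equal to one, and then to transport the result along an arbitrary permutation using the symmetry Lemma~\ref{Lem_sym}. The starting point is to set $v=\bar{\la}$ in \eqref{Eq_CTSchur-2} with $m=\la_1$, so that
\[
D_{\bar{\la}}(a;\bt)=\bigg[\prod_{i=1}^n\prod_{j=1}^{\la_{n-i+1}} s_{ij}\bigg] D_0(a1^m;\btau),
\]
and then to use Theorem~\ref{Thm_Poincare-q-Dyson} (in its form with $n\mapsto m+n$) to extract the coefficient. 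As in the illustrative example worked out just above the proposition, the $s_{ij}$'s we are selecting fill in exactly the Young-diagram shape of $\bar{\la}$ inside the $n\times m$ rectangle, so the permutations in $\Symm_{m+n}$ that contribute must place the inserted letters $n{+}1,\dots,n{+}m$ in natural order, and the $i$th original letter in the column indexed by $\la_{n-i+1}$.

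The key combinatorial step is therefore to observe that when $\la$ is strict, the column multiplicities $\mult_i(\la)\in\{0,1\}$, so the ancillary sum over $\Symm_{\mult_0}\times\cdots\times\Symm_{\mult_m}$ collapses to a single permutation $\pi\in\Symm_{m+n}$ of the form displayed just above the proposition. Feeding this lone $\pi$ into Theorem~\ref{Thm_Poincare-q-Dyson} and simplifying (the factor $\prod_{j}(1-q^{\sigma_j})/(1-q^{\pi(\sigma_j)})$ telescopes because each inserted $a_{n+i}=1$) gives
\[
D_{\bar{\la}}(a;\bt)=\prod_{i=1}^n\qbin{\bar{\la}_i+\A_i-1}{a_i-1}.
\]

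To pass from $\bar{\la}$ to $w^{-1}(\bar{\la})$ I would invoke Lemma~\ref{Lem_sym}: with the choice $v=w^{-1}(\bar{\la})$ it reads
\[
D_{w^{-1}(\bar{\la})}(a;\bt)=t_{R(w)}\,D_{\bar{\la}}\big(w(a);w(\bt)\big),
\]
and substituting the closed form above (with $a$ replaced by $w(a)$) produces the $q$-binomial product with the permuted arguments, which is precisely $w\big(\prod_i \qbin{\bar{\la}_i+\A_i-1}{a_i-1}\big)$. Finally, specialising $t_{ij}=q^{a_j}$ turns $t_{R(w)}$ into $q^{\sum_{(i,j)\in R(w)}a_j}$, yielding the stated identity.

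The only non-routine step is the combinatorial analysis that pins down the unique contributing permutation in the strict case; everything else is bookkeeping. The potential pitfall there is matching the diagrammatic description (which rows and columns of the staircase are covered by $\bar{\la}$) with the constraint $t_{ij}=0$ for $i>n$ coming from \eqref{Eq_tauset}, to confirm that no other permutation in $\Symm_{m+n}$ produces the monomial $\prod_{i,j\le\la_{n-i+1}}s_{ij}$. Once this is verified, the strictness of $\la$ is exactly what forces each $\mult_i\le 1$, and the proposition drops out.
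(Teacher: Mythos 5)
Your proposal is correct and tracks the paper's own proof step for step: specialise Proposition~\ref{Prop_CT-vnu} at $v=\bar{\la}$, observe that strictness forces every column multiplicity $\mult_i(\la)$ to be at most one so a single permutation in $\Symm_{m+n}$ survives, feed it into Theorem~\ref{Thm_Poincare-q-Dyson} to obtain the $q$-binomial product, and then transport via Lemma~\ref{Lem_sym} (your substitution $v=w^{-1}(\bar{\la})$ is exactly the intended reading of the paper's ``with $v=\bar{\la}$'') before setting $t_{ij}=q^{a_j}$. The only thing the paper spells out that you gesture at is the explicit form of the unique contributing permutation, but your telescoping remark correctly captures why the product simplifies.
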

When $w=(n,\dots,2,1)$ is the permutation of maximal length we obtain
\eqref{Eq_strict}.

\section{An application to some $q$-Dyson coefficients}\label{Sec_Questions}

A result concerning coefficients of the $q$-Dyson product
other than the constant term is a theorem of Lv, Xin and Zhou
\cite{LXZ09}, which can be rephrased as an evaluation of
$D_{v,0}(a)$ for certain $v\in\Z^n$ (as opposed to $v\in\Comp_n$) 
as follows.

\begin{theorem}\label{Thm_LXZ}
Let $v\in\Z^n$ such that $\abs{v}=0$, $\max\{v\}\leq 1$ and $v_1=1$.
Let $I$ be the index-set of the positive $v_i$, i.e.,
$I=\{1\leq i\leq n:~v_i=1\}$. Then
\[
\CT\bigg[x^{-v} \prod_{1\leq i<j\leq n} (x_i/x_j)_{a_i} (qx_j/x_i)_{a_j} \bigg]
=\qbin{\An}{a}\sum_{J\subseteq I}(-1)^{\abs{J}} q^{E(J)}
\frac{1-q^{a_J}}{1-q^{1+\abs{a}-a_J}},
\]
where $a_J:=\sum_{j\in J} a_j$ and 
\[
E(J)=\sum_{\substack{1\leq i\leq j\leq n \\[1pt] j\not\in J}} v_i a_j.
\]
\end{theorem}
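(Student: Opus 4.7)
The plan is to deduce Theorem~\ref{Thm_LXZ} from Kadell's orthogonality conjecture (Theorem~\ref{Thm_Kadell-conjecture}, just proved). Write $c_w:=\CT[x^w P(x;a)]$ where $P(x;a):=\prod_{1\leq i<j\leq n}(x_i/x_j)_{a_i}(qx_j/x_i)_{a_j}$, so that the left-hand side of the theorem is $c_{-v}$. Since $v$ has $m:=\abs{I}$ entries equal to $+1$ at positions in $I$ and nonpositive entries elsewhere summing to $-m$, the exponent $-v$ has $-1$'s on $I$ and nonnegative entries summing to $m$ on the complement. The bridge between Kadell (which directly covers $D_{v',(m)}(a)$ only for compositions $v'\in\Comp_n$) and the target coefficient $c_{-v}$ is the monomial expansion
\[
h_m(x^{(a)})=\sum_{u\in\Comp_n,\,\abs{u}=m}\prod_{i=1}^n\qbin{u_i+a_i-1}{u_i}\,x^u,
\]
which converts each Kadell evaluation into the linear relation
\[
D_{v',(m)}(a)=\sum_{\abs{u}=m}\prod_{i=1}^n\qbin{u_i+a_i-1}{u_i}\,c_{u-v'}
\]
among the $c_w$'s. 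Theorem~\ref{Thm_Kadell-conjecture} supplies explicit nonzero values for $v'=me_k$ and vanishing for compositions with $(v')^+\neq(m)$, producing a rich system for the $c_w$'s.

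I would proceed by induction on $m=\abs{I}\geq 1$. For the base case $m=1$, we have $-v=e_l-e_1$ for some $l\geq 2$, and Kadell reads
\[
D_{e_1,(1)}(a)=\frac{1-q^{a_1}}{1-q}\,c_0+\sum_{i=2}^n\frac{1-q^{a_i}}{1-q}\,c_{e_i-e_1},
\]
with $c_0=\qbin{\abs{a}}{a}$ by the $q$-Dyson theorem. Inductively assuming the theorem's formula for $c_{e_i-e_1}$ with $i<l$, the remaining unknown $c_{e_l-e_1}$ can be isolated by repeatedly combining this relation with analogous Kadell evaluations for larger $l$, using the telescoping identity $\sum_{i=2}^n(1-q^{a_i})q^{a_2+\cdots+a_{i-1}}=1-q^{\abs{a}-a_1}$. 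For the inductive step on $m\geq 2$, the alternating sum $\sum_{J\subseteq I}(-1)^{\abs{J}}$ emerges by layering Kadell-vanishing relations $D_{v'',(m)}(a)=0$ with $(v'')^+\neq(m)$: each subset $J$ contributes a factor $(1-q^{a_J})/(1-q^{1+\abs{a}-a_J})$ (the block $J$ playing the role of $\{k\}$ in Kadell's evaluation of $D_{me_k,(m)}(a)$), and the cumulative $q^{\abs{a}-\sigma_k}$ shifts assemble into $q^{E(J)}$.

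The main obstacle is the precise combinatorial verification in the inductive step: that the linear combinations of Kadell evaluations (nonzero for $v'=me_k$ and zero for $(v')^+\neq(m)$) collapse to exactly $\qbin{\abs{a}}{a}\sum_{J\subseteq I}(-1)^{\abs{J}}q^{E(J)}(1-q^{a_J})/(1-q^{1+\abs{a}-a_J})$, with all intermediate $c_w$'s for LXZ-instances of strictly smaller $\abs{I}$ matching the inductive hypothesis. An attractive alternative, likely cleaner, is to extend the $y$-variable and $(0,1)$-matrix framework of Propositions~\ref{Prop_CT-kappa}--\ref{Prop_CT-vnu}: introducing $m=\abs{I}$ auxiliary variables $y_1,\dots,y_m$ associated with the positions of $I$ and extracting the appropriate coefficient of the $\bs$-variables should produce the inclusion-exclusion over $J\subseteq I$ directly, with $J$ indexing which positions of $I$ contribute nontrivially to each permutation summand, mirroring the Gale--Ryser style analysis of Section~\ref{Sec_Kadell}.
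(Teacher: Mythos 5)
The paper does not actually prove Theorem~\ref{Thm_LXZ}: it is cited verbatim from [LXZ09] as a known result, and the only part the paper re-derives from Kadell's conjecture is the $\abs{I}=1$ special case, stated as Theorem~\ref{Thm_Sills-conjecture}. Your proposal is therefore aiming at something stronger than the paper attempts, and the comparison can only really be made on the base case.

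For $m=1$, your starting observation
\[
D_{e_1,(1)}(a)=\sum_{i=1}^n\frac{1-q^{a_i}}{1-q}\,c_{e_i-e_1}
\]
is correct and is also where the paper begins, but the mechanism you propose for isolating $c_{e_l-e_1}$ (``repeatedly combining this relation with analogous Kadell evaluations for larger $l$, using the telescoping identity'') is not a proof: you have one linear relation in $n-1$ unknowns $c_{e_2-e_1},\dots,c_{e_n-e_1}$, and ``analogous Kadell evaluations'' for $v'=e_k$ only introduce new unknowns $c_{e_i-e_k}$. What the paper actually does is sharper: it takes Kadell's identity for $v'=e_k$ and the identity for $v'=e_{k-1}$ applied to $\gamma(a)$, pushes the latter through the $q$-shifted cyclic action $\gamma$ (which leaves constant terms invariant and satisfies $\gamma^{-1}(D(a;x))=D(\gamma(a);x)$), and observes that the two resulting kernels differ in exactly one term, namely the coefficient of $x_1/x_k$. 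Subtracting isolates $\CT[(x_1/x_k)D(a;x)]$ in one step, and the general $(x_r/x_s)$ case then follows by iterating $\gamma$. Your sketch misses this $\gamma$-pairing, which is the actual engine of the argument.

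For $m\geq 2$ you acknowledge yourself that the inclusion--exclusion structure ``emerging'' from layered Kadell-vanishing relations is unverified; the paper offers no proof here at all, so there is nothing to check it against, but as written it is a plan, not an argument. Finally, your proposed alternative of extending the $y$-variable and $(0,1)$-matrix machinery of Propositions~\ref{Prop_CT-kappa}--\ref{Prop_CT-vnu} runs directly against the paper's own caveat after \eqref{Eq_D_vla}: since the entries of $v$ arise there as row sums of a $(0,1)$-matrix, that framework is explicitly restricted to $v\in\Comp_n$ and ``has little to say'' about $v\in\Z^n\setminus\Comp_n$, which is precisely the regime of Theorem~\ref{Thm_LXZ}.
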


The technical condition $v_1=1$ can be easily dropped by the application
of a simple transformation which will be described later, but the formula thus
obtained is less attractive. The above theorem extends the equal parameter
case of Stembridge's first layer formulas for characters of 
$\mathrm{SL}(n,\mathbb{C})$ \cite{Stembridge87} 
and contains as special cases a number of earlier conjectures of 
Sills \cite{Sills06}.

In their report the anonymous referee asked the question as to whether 
some of these more general Dyson-type identities follow
from our main theorems. 
In answer to this question we will now show that
Kadell's orthogonality conjecture of Theorem~\ref{Thm_Kadell-conjecture}
indeed implies one of Sills' ex-conjectures \cite[Conjecture 1.2]{Sills06}. 

\begin{theorem}[{Cf. \cite[Corollary 1.4]{LXZ09}}]\label{Thm_Sills-conjecture}
Let $a=(a_1,\dots,a_n)$ be of sequence of nonnegative integers and 
$1\leq r,s\leq n$ a pair of distinct integers. Then
\[
\CT\bigg[(x_r/x_s) \prod_{1\leq i<j\leq n} (x_i/x_j)_{a_i} 
(qx_j/x_i)_{a_j} \bigg]
=-q^{E_{r,s}} \frac{1-q^{a_s}}{1-q^{1+\abs{a}-a_s}}\qbin{\An}{a},
\]
where
\[
E_{r,s}=\chi(r<s)+\sum_{i=s+1}^{r-1}a_i.
\]
\end{theorem}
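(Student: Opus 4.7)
The plan is to derive Theorem~\ref{Thm_Sills-conjecture} from Kadell's orthogonality conjecture (Theorem~\ref{Thm_Kadell-conjecture}) via its $m=1$, $v=e_s$ instance. Since $s_{(1)}(x^{(a)}) = h_1(x^{(a)}) = (1-q)^{-1}\sum_i (1-q^{a_i})x_i$, expanding $D_{e_s,(1)}(a) = \CT[x^{-e_s}h_1(x^{(a)})L(a)]$ with $L(a) = \prod_{1\le i<j\le n}(x_i/x_j)_{a_i}(qx_j/x_i)_{a_j}$ produces
\[
(1-q)D_{e_s,(1)}(a)
= (1-q^{a_s})\qbin{\An}{a}
+ \sum_{i\ne s}(1-q^{a_i})\,\CT\bigl[(x_i/x_s)L(a)\bigr].
\]
Theorem~\ref{Thm_Kadell-conjecture} evaluates the left-hand side explicitly, producing a single linear relation among the $n-1$ Sills-type quantities $\{\CT[(x_i/x_s)L(a)]\}_{i\ne s}$ for each fixed $s$.

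I would then proceed by induction on $n$. For $n=2$ the relation above contains exactly one unknown, which solves directly and, after simplification using the $q$-binomial theorem, matches the claimed value $-q\,(1-q^{a_s})(1-q^{1+\An-a_s})^{-1}\qbin{\An}{a}$. For $n\ge 3$ and any $k \notin \{r,s\}$, the third remark following \eqref{Eq_D_vla} (adapted to the extra factor $x_r/x_s$, which is independent of $x_k$) reduces $\CT[(x_r/x_s)L(a)]$ at $a_k=0$ to the corresponding Sills quantity on $n-1$ variables with the shortened sequence $a^{(k)}$. Indeed, when $a_k=0$ the dependence of $L(a)$ on $x_k$ is a polynomial in $x_k^{-1}$ whose $x_k$-constant term equals $1$, so integrating out $x_k$ first yields precisely the $q$-Dyson product in the remaining variables. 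By the inductive hypothesis the reduced quantity equals the Sills value for $a^{(k)}$, which coincides with the specialization at $a_k = 0$ of the $n$-variable formula.

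Finally, to promote the Kadell relation together with the boundary evaluations at $a_k = 0$ (one for each $k\ne r,s$) to a proof for general $a$, I would view both sides of Theorem~\ref{Thm_Sills-conjecture}, after clearing the denominator $1-q^{1+\An-a_s}$, as polynomials in the indeterminates $q^{a_1},\dots,q^{a_n}$; the Kadell relation for each $s$ together with the vanishing of the difference at the specializations $a_k=0$ then provides enough linear constraints to force equality identically. The main obstacle I anticipate is making this last step rigorous, i.e.\ verifying that the polynomial degree bounds and the vanishing conditions combine to pin down the answer uniquely. An appealing alternative worth pursuing in parallel is to derive a direct contiguous relation for $\CT[(x_r/x_s)L(a)]$ under the shift $a_n\mapsto a_n-1$, obtained by expanding the factorization $L(a) = L(a-e_n)\prod_{i<n}(1-q^{a_n-1}x_n/x_i)$; this would reduce the general case to the base case $a_n=0$, which is already handled by the inductive hypothesis.
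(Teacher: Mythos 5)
You correctly identify the right starting point: the $m=1$, $v=(0^{s-1},1,0^{n-s})$ instance of Kadell's orthogonality conjecture, which after expanding $h_1\big(x^{(a)}\big)$ yields a single linear relation among the $n-1$ quantities $\CT[(x_i/x_s)L(a)]$, $i\ne s$. That much matches the paper. The gap is in how you turn one relation among $n-1$ unknowns into a determination of each of them. The paper's key device, which you do not find, is the $q$-shifted cyclic transformation $\gamma(L)(x_1,\dots,x_n)=L(x_2,\dots,x_n,x_1/q)$, which satisfies $\CT[L]=\CT[\gamma(L)]$ and $\gamma^{-1}(D(a;x))=D(\gamma(a);x)$. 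Applying Kadell a second time with $k-1$ and $\gamma(a)=(a_2,\dots,a_n,a_1)$ and then pushing it back through $\gamma$ produces a second linear relation that differs from the first only in the $i=1$ coefficient (it becomes $\tfrac{1-q^{a_1}}{q(1-q)}$ instead of $\tfrac{1-q^{a_1}}{1-q}$). Subtracting the two relations kills all terms except $i=1$ and isolates $\CT[(x_1/x_k)D(a;x)]$ in a single step, giving the $r=1$ case; the general $r$ then follows by a further cyclic shift $\CT[(x_r/x_s)D(a;x)]=q^{\chi(r=1)-\chi(s=1)}\CT[(x_{r-1}/x_{s-1})D(\gamma(a);x)]$. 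No induction, no degree counting.

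Your substitute for this second relation—the boundary evaluations at $a_k=0$ plus a polynomial-in-$q^{a_i}$ degree argument—is not sound as stated. The quantities $\CT[(x_i/x_s)L(a)]$ are, for each fixed $a$, polynomials in $q$, and the claimed right-hand side, after dividing out the $q$-multinomial, is a rational function whose denominator $1-q^{1+|a|-a_s}$ mixes $q$ with the exponents; there is no evident finite-degree polynomial structure in the indeterminates $q^{a_1},\dots,q^{a_n}$ that you could appeal to, and you identify this yourself as the main unresolved obstacle. The $n-2$ specializations $a_k=0$ ($k\notin\{r,s\}$) plus the one Kadell relation do not obviously form enough independent constraints, and even if they did, justifying uniqueness would require exactly the kind of structural information that is missing. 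Your secondary alternative (a contiguous relation under $a_n\mapsto a_n-1$) is not developed far enough to judge, but the factorization $L(a)=L(a-e_n)\prod_{i<n}(1-q^{a_n-1}x_n/x_i)$ produces a sum over subsets of $\{1,\dots,n-1\}$ and does not visibly close into an induction. In short: right opening move, but the decisive step—generating the second relation via the cyclic shift—is missing, and what you propose instead does not fill the hole.
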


In the above the summation is understood cyclically, i.e.,
\[
\sum_{i=s+1}^{r-1}a_i=\sum_{i=s+1}^na_i+\sum_{i=1}^{r-1}a_i
\]
when $r<s$. 

In the proof we will exploit some properties of the following transformation.
For a Laurent polynomial $L(x)$, define
the $q$-shifted cyclic action $\gamma$ on $L$ as
\[
\gamma\big(L(x_1,x_2,\dots,x_{n-1},x_n)\big)=L(x_2,x_3,\dots,x_n,x_1/q).
\]
Abbreviate the $q$-Dyson product as $D(a;x)$. Then 
$\CT[L]=\CT[\gamma(L)]$ and 
\[
\gamma^{-1}\big(D(a;x)\big)=D\big(\gamma(a);x\big),
\]
where $\gamma(a):=(a_2,\dots,a_n,a_1)$, cf. \cite[Lemma 2.1]{LXZ09}.

\begin{proof}[Proof of Theorem~\ref{Thm_Sills-conjecture}]
Fixing $1<k\le n$ and $m=1$, \eqref{Eq_Kadell-niet-nul} reads
\[
\CT\bigg[\,\sum_{i=1}^n\frac{1-q^{a_i}}{1-q}
\cdot\frac{x_i}{x_k}\:D(a;x)\bigg]=
\frac{q^{a_{k+1}+\dots+a_n}(1-q^{a_k})}{1-q^{\abs{a}-a_k+1}}\qbin{\An}{a}.
\] 
Applying the same formula but with $k-1$ instead of $k$ and 
$\gamma(a)$ instead of $a$, and with the convention $a_{n+1}=a_1$, we obtain
\[
\CT\bigg[\,\sum_{i=1}^n\frac{1-q^{a_i+1}}{1-q}
\cdot \frac{x_i}{x_k}\:D(\gamma(a);x)\bigg]=
\frac{q^{a_{k+1}+\dots+a_{n+1}}(1-q^{a_k})}{1-q^{\abs{a}-a_k+1}}\qbin{\An}{a}.
\] 
By the above-mentioned properties of $\gamma$, the left hand side
may be reinterpreted as
\[  
\CT\bigg[\bigg(\frac{1-q^{a_1}}{q(1-q)}
\cdot \frac{x_1}{x_k}
+\sum_{i=2}^n\frac{1-q^{a_i}}{1-q}
\cdot \frac{x_i}{x_k}\bigg)D(a;x)\bigg].
\]
Subtracting the two equations thus obtained we find that
\[
\big(1-q^{-1}\big) 
\CT\bigg[\frac{1-q^{a_1}}{1-q}
\cdot\frac{x_1}{x_k}\,D(a;x)\bigg]=
\frac{(1-q^{a_1})q^{a_{k+1}+\dots+a_n}(1-q^{a_k})}{1-q^{\abs{a}-a_k+1}}
\qbin{\An}{a}.
\]
This establishes Theorem \ref{Thm_Sills-conjecture} in the special case
when $r=1$. Using the convention $x_0=x_n$,
the full content of the theorem follows by a repeated 
application of the identity
\[
\CT\big[(x_r/x_s)D(a;x)\big]=q^{\chi(r=1)-\chi(s=1)}
\CT\big[(x_{r-1}/x_{s-1})D(\gamma(a);x)\big]. \qedhere
\]
\end{proof}

As briefly outlined below, Theorem~\ref{Thm_Sills-conjecture} 
can also be obtained 
directly using the polynomial method we employed to prove 
Theorem~\ref{Thm_Poincare-q-Dyson}.
Note that we may clearly assume that $s=1$ and that all $a_i$, 
with the possible exception of $a_r$, are positive. 
Accordingly, we want to compute the coefficient of the monomial
\[
(x_1/x_r) \prod_{i=1}^n x_i^{\abs{a}-a_i}
\]
in the homogeneous polynomial
\[
F(x)=\prod_{1\le i<j\le n}
\Bigg(\prod_{k=0}^{a_i-1}{\big(x_j-x_iq^k\big)}\cdot
\prod_{k=1}^{a_j}{\big(x_i-x_jq^k\big)} \Bigg).
\]
This can be done efficiently using Lemma~\ref{Lem_interpol} if the sets $A_i$
therein are chosen as follows. Let $A_i=\{q^{\alpha_i}:~\alpha_i\in B_i\}$,
where
\[
B_1=\{0,1,\ldots,\abs{a}-a_1+1\},\quad
B_r=\{0,\ldots,\abs{a}-a_r\}\setminus \bigg\{\sum_{i=2}^{r-1}a_i\bigg\}
\]
and $B_i=\{0,\ldots,\abs{a}-a_i\}$ otherwise. The sets $A_i$ clearly
have the right cardinalities. 
Now there is exactly one element $\bc\in A_1\times\dots\times A_n$
such that $F(\bc)\ne 0$. Indeed, 
assume that $c_i=q^{\alpha_i}\in A_i$ and $F(\bc)\ne 0$.
Then all $\alpha_i$ are distinct, with the possible exception of
$a_r$ being equal to $a_j$ for some $j>r$. Moreover, 
$\alpha_j\ge \alpha_i+a_i+\chi(j<i)$
holds for $\alpha_j> \alpha_i$. Next consider the unique permutation 
$\pi\in \Symm_n$ for which 
\[
0\le \alpha_{\pi(1)}\le \alpha_{\pi(2)}\le\dots\le \alpha_{\pi(n)}\le 
\abs{a}-a_{\pi(n)}+\chi\big(\pi(n)=1\big).
\]
Here $\pi(i)=r$ is assumed in case of $\alpha_{\pi(i+1)}=\alpha_{\pi(i)}$.
We can argue that
\[
\abs{a}-a_{\pi(n)}=
\sum_{i=1}^{n-1}a_{\pi(i)}\le 
\sum_{i=1}^{n-1}\big(\alpha_{\pi(i+1)}-\alpha_{\pi(i)}\big)=
\alpha_{\pi(n)}-\alpha_{\pi(1)}\le \abs{a}-a_{\pi(n)}+1.
\]
Notice that the first inequality is strict if $\pi$ is not the identity
permutation, while the second inequality is strict if $\pi(n)\ne 1$.
Suppose that $\pi(n)=1$. Then there is exactly one $i$ such that
$\pi(i+1)<\pi(i)$. This implies that $\pi=(2,3,\dots,n,1)$ and
$\alpha_r=a_2+\dots+a_{r-1}$, which is not possible in view of 
the choice of $B_r$. Thus $\pi(n)\ne 1$, implying $\pi= \id$ and 
$\alpha_i=a_1+\dots +a_{i-1}$ for every $i$. 
Substituting these values into
\[
\frac{F(c_1,c_2,\dots,c_n)}{\phi_1'(c_1)\phi_2'(c_2)\cdots\phi_n'(c_n)}
\]
one recovers the case $s=1$ of Theorem \ref{Thm_Sills-conjecture}
without any difficulty.
\bigskip

As a final remark we mention that Zhou \cite{Zhou11} successfully applied
Theorem \ref{Thm_LXZ} to prove another conjecture of Kadell
\cite[Conjecture 2]{Kadell98} related to the Dyson product. 
The referee also asked if our results could be used to obtain 
Zhou's theorem. At present we do not know how to do this.
However, the polynomial method can be used to prove
a special case of Kadell's \cite[Conjecture 3]{Kadell98}, see
\cite{Karolyi13}. 

\subsection*{Acknowledgement}
We thank David Bressoud for helpful correspondence and the anonymous
referee for raising some interesting questions, 
leading to the material presented
in Section~\ref{Sec_Questions}.

\bibliographystyle{amsplain}

\end{document}